\newtheorem{theorem}{Theorem}[section]
\newtheorem{lem}[theorem]{Lemma}
\newcommand{\eqref}[1]{(\ref{#1})}
\newcommand{\AR}{\operatorname{AR}}
\renewcommand{\Pr}{\operatorname{Pr}}
\renewcommand{\epsilon}{\varepsilon}
\def\afrac#1#2{#1/(#2)}
\def\sklfrac#1#2{(#1/#2)}
\begin{document}
\begin{frontmatter}

\title{Testing the regularity of a smooth signal}
\runtitle{Testing the regularity of a smooth signal}

\begin{aug}
%%%% inicialai - be tarpu
\author{\inits{A.}\fnms{Alexandra} \snm{Carpentier}\corref{}\ead[label=e1]{a.carpentier@statslab.cam.ac.uk}}% \and
%%\runauthor{} %% auto
\address{Statistical Laboratory, Centre for Mathematical Sciences,
Wilberforce Road, CB3 0WB Cambridge, United Kingdom. \printead{e1}}
\end{aug}

% HISTORY:
\received{\smonth{4} \syear{2013}}
\revised{\smonth{8} \syear{2013}}

% ABSTRACT
%
\begin{abstract}
We develop a test to determine whether a function lying in a fixed
$L_2$-Sobolev-type ball of
smoothness $t$, and generating a noisy signal, is in fact of a given
smoothness $s \geq t$
or not. While it is impossible to construct a uniformly consistent test
for this problem on
\textit{every} function of smoothness $t$, it becomes possible if we
remove a sufficiently large region
of the set of functions of smoothness $t$. The functions that we remove
are functions of smoothness
strictly smaller than $s$, but that are very close to $s$-smooth
functions. A lower bound on the size
of this region has been proved to be of order $n^{-t/(2t+1/2)}$, and in
this paper, we provide a test
that is consistent after the removal of a region of such a size. Even
though the null hypothesis is
composite, the size of the region we remove does not depend on the
complexity of the null
hypothesis.
\end{abstract}

% KEYWORDS
% visi is mazosios raides ir pagal abecele
%
\begin{keyword}
\kwd{functional analysis}
\kwd{minimax bounds}
\kwd{non-parametric composite testing problem}
\end{keyword}

\end{frontmatter}

%s1 #&#
\section{Introduction}

We consider in this paper a composite testing problem in the
non-parametric Gaussian regression setting. % We want to decide whether
%the function generating the data is of some given smoothness $s$, or
%whether it is rougher (of some smoothness $t<s$), by constructing a
%smoothness test.% The difficulty of this problem comes mainly from the
%fact that the space of smooth functions, for instance Sobolev-type
%spaces, are of infinite dimension. It implies that traditional
%Neymann-Pearson theory does not apply for a such problem.
%We implement the test on two
Assuming that the unknown regression function $f$ lies in a given
smoothness class (indexed by $t$), we want to decide whether $f$ is in
fact in a much more regular class (indexed by $s \geq t$), by
constructing a suitable test. More precisely, we consider the setting
of testing between two fixed $L_2$ Sobolev-type classes, which we
define formally in Section~\ref{sec:setting} below.

%We will now present more formally the testing problem. %and also the
%previously achieved results i
%We consider in this paper the setting of testing between two fixed
%$L_2$ Sobolev-type classes (we define formally this object in Section

Let $\Sigma(t,B)$ be the $L_2$-Sobolev-type ball of functions in
$[0,1]$ of smoothness $t$ and radius $B$, and let $\Sigma(s,B)$ with
$s>t$ be a sub-model (i.e., $\Sigma(s,B) \subset\Sigma(t,B)$). We
assume that we have observations generated according to a Gaussian
non-parametric model with underlying function~$f$, at noise level $n$,
where $f\in\Sigma(s,B)$ or $f \in\Sigma(t,B) \setminus\Sigma(s,B)$.

For $G \subset L_2$, set $\|f - G\|_2 = \inf_{g \in G} \|f-g\|_2$. We
define for $\rho_n \geq0$ the sets
\begin{eqnarray*}
\tilde\Sigma(t, B,\rho_n) = \bigl\{f \in\Sigma(t,B)\dvtx  \bigl\|f -
\Sigma (s,B)\bigr\|_2 \geq\rho_n \bigr\}.
\end{eqnarray*}
Note that these sets are separated away from $\Sigma(s,B)$ whenever
$\rho_n >0$. They correspond to $\Sigma(t,B) \setminus\Sigma(s,B)$
where we have removed some critical functions, very close to functions
in $\Sigma(s,B)$.

We are interested in the composite testing problem:
%
%e1.1 #&#
\begin{equation}
\label{test} H_0\dvtx  f \in\Sigma(s,B)\quad  \mbox{vs.}\quad  H_1\dvtx  f \in\tilde
\Sigma(t, B,\rho_n).
\end{equation}
More precisely, we want to know the minimal order of magnitude of $\rho
_n$ that enables the construction of a uniformly consistent test $\Psi
_n$ between $H_0$ and $H_1$, that is, of a test such that there exists
$N$ that depends on $H_0, H_1$ and $\alpha$ only such that for any
$n\geq N$,
\begin{eqnarray*}
\sup_{f \in H_0} \mathbb E_{f} \Psi_n + \sup
_{f \in H_1} \mathbb E_{f} (1-\Psi_n ) \leq
\alpha.
\end{eqnarray*}

Two topics that are closely related to this question have been
thoroughly studied. The first one is non-parametric signal detection
where $H_0 = \{0\}$. The second is the creation of adaptive and honest
non-parametric confidence bands around functions.

Let us first recall the results obtained in signal detection where one
wishes to test%. The objective here is to test on whether the function
%generating the data is $0$ (no signal), or if this function is some
%smooth function that is non-null and that is not too close to $0$. In
%other words, one wishes to test
%
%e1.2 #&#
\begin{equation}
\label{test2} H_0\dvtx  f=0\quad  \mbox{vs.}\quad  H_1\dvtx  f \in \bigl\{f \in
\Sigma(t,B)\dvtx  \|f - 0\|_2 \geq \rho _n \bigr\}.
\end{equation}
As in any testing problem, in order to obtain a uniformly consistent
test, the model has to be restricted such that the elements in $H_0$
are not too close to the ones in $H_1$. This explains the presence of
the separation by $\rho_n$. Ingster \cite
{ingster1987minimax,ingster1993asymptotically}, Spokoiny \cite
{spokoiny1996adaptive} and
Ingster and Suslina \cite{ingster2002nonparametric}
prove that the minimal order of $\rho_n$ that
enables the existence of a consistent test in the above problem is
\begin{eqnarray*}
\rho_n \geq D n^{-\afrac{t}{2t+1/2}}.
\end{eqnarray*}
%
%Also, in paper \citep{spokoiny1996adaptive},
For $\rho_n$ of this order, the authors also build a consistent test
for the testing problem \eqref{test2}. They prove that the testing
problem is equivalent to testing whether the sum of the squares of the
means of independent (or close to independent) sub-Gaussian random
variables is null or not, and the usual $\chi_2$-test theory applies.
The size $\rho_n$ of the separation area is related to the minimax rate
of estimation of the $L_2$ norm of $f$ under the alternative
hypothesis. %, i.e. in $\Sigma(t,B)$.
%This problem is thus to our mind solved in a satisfying way.
A question that arises is how the results change when the null
hypothesis is a composite hypothesis, in our case an infinite
dimensional Sobolev-type ball.
%However, although it is related to the the testing problem that
%interests us described in Equation \eqref{test}, it is not equivalent,
%since in this case, the null hypothesis is an infinite dimensional
%Sobolev-type ball, whereas in the case of \citep{ingster1987minimax,
%ingster1993asymptotically,
%ingster2002nonparametric,spokoiny1996adaptive}, it is a single point ($

%what is in them. equation (1.1) is related to confidence bands only in
%the work Hoffmann and Nickl (2011) -- which you don't cite here -- and
%in Bull and Nickl (2013), and in some sense also in Hoffmann-Lepski
%and Juditsky & Lambert-Lacroix. These can be cited when you connect to
%the testing problem (1.1). That parameters have to be removed is
%proved in Low (1997), Cai and Low (2004), Hoffmann and Nickl (2011),
%Bull and Nickl (2013). The papers Cai and Low (2006), Baraud (2004),
%Robins van der Vaart (2006) and Bull and Nickl (2013) give
%constructive procedures without any removal in special cases. I don't
%know how you are going to discuss this literature but the way it is
%done now doesn't make sense, so please look at it carefully.}

%hoffman2011adaptive, bull2011adaptive}

%This is the reason why the results obtained for the testing problem
%described in Equation \eqref{test2} do not transfer trivially to the
%problem described in Equation \eqref{test}. Actually, t
The testing problem described in equation \eqref{test} is also closely
connected to the problem of the creation of confidence bands around
functions -- see, for instance, Hoffmann and Lepski \cite
{hoffman2002random},
Juditsky and Lambert-Lacroix \cite
{juditsky2003nonparametric}, Hoffmann and Nickl \cite
{hoffmann2011adaptive}, Bull and Nickl \cite{bull2011adaptive}
where this relation is made clear. %Since confidence bands have
%received a lot of attention recently, with some very nice results
%achieved, we are going to describe quickly this problem here, and how
%these results transfer to our problem.\\
Despite the fact that
%The puzzling fact for the creation of confidence bands, which is
%actually not so strange when one thinks in terms of testing problem, is
there exists a quite complete and satisfying theory for adaptive
non-parametric estimation % - a fundamental result is that without
%knowing the smoothness of a function, it is possible to estimate it as
%well as what an oracle knowing this smoothness would have performed,
%and that uniformly on \textit{every} function of some given Hilbert
%space -
-- see, for example, Lepski \cite
{lepski1992problems}, Donoho \textit{et al.} \cite{donoho1996density},
Barron \textit{et al.} \cite{barron1999risk}, Tsybakov \cite{tsybakov2003introduction} -- the
theory of adaptive
confidence sets has some fundamental limitations. Indeed, one has to
remove critical regions from the parameter space in order to construct
honest adaptive confidence sets, see Low
\cite{low1997nonparametric},
Cai and Low \cite{cai2004adaptation}, Hoffmann and Nickl \cite{hoffmann2011adaptive}, Bull and Nickl \cite{bull2011adaptive}. In the
paper Bull and Nickl \cite{bull2011adaptive}, the
problem of $L_2$-adaptive and honest
confidence sets is considered and in the course of the proofs, the
authors establish that in the testing problem \eqref{test}, $\rho_n$
can be taken of the order
\begin{eqnarray*}
\rho_n \geq D \max\bigl(n^{-\afrac{t}{2t+1/2}}, n^{-\afrac{s}{2s+1}}\bigr),
\end{eqnarray*}
for $D$ large enough depending on the level of the test and on $s,t$.
%%one can then build a consistent test for the testing problem
%described in Equation \eqref{test}.
On the other hand, they prove in the case of density estimation (we
provide a proof of this fact in our setting, see Theorem~\ref{th:lb}
below) that the lower bound for $\rho_n$ is
\begin{eqnarray*}
\rho_n \geq D' n^{-\afrac{t}{2t+1/2}},
\end{eqnarray*}
for some $D'$ positive; otherwise there exists no consistent test for
the problem \eqref{test}. In the case $s <2t$, the upper and lower
bound do not match (which in the context of confidence sets is
unimportant, see Baraud \cite
{baraud2004confidence}, Cai and Low \cite{cai2006adaptive},
Robins and Van Der Vaart \cite{robins2006adaptive},
Bull and Nickl \cite{bull2011adaptive} related results).
%As one can see, these two order of $\rho_n$ coincide whenever $2t \leq
%s$, and in that case the testing problem is thus solved in a minimax
%way, where again the order of the diameter of the separation area,
%i.e. $\rho_n$, is related to the minimax rate of estimation of the
%$L_2$ norm of $f$ under the alternative hypothesis, i.e. in $
%significantly smaller than the necessary $\rho_n$ for the test in
%and $H_1$ is driven by the functional estimation speed in the
%Sobolev-type space of larger smoothness.\\
%For the problem of adaptive confidence bounds around $f$, this does
%not really matter since $n^{-\frac{s}{2s+1}}$ is precisely of same
%order as the minimax rate of estimation in the sub-model $
%confidence bands without even separating $\Sigma(s,B)$ and $

From the point of view of hypothesis testing, the case $s<2t$ is in
fact of particular interest, as it implicitly addresses the question
whether the complexity of the null hypothesis should influence the
separation rate in non-parametric composite testing problems. When $s
\geq2t$, the rate of estimation in the null hypothesis is of order of
the separation rate, and a reduction to a singleton null hypothesis is
(intuitively) always possible as shown by the infimum test considered
in Bull and Nickl \cite{bull2011adaptive}. For $s<2t$,
new ideas seem to be required.

To the best of our knowledge, the classical literature on
non-parametric hypothesis testing does not answer this question. A
majority of papers consider the case of a \textit{singleton, or a
parametric (finite dimensional) null hypothesis}, see Ingster \cite{ingster1987minimax},
Ingster and Suslina \cite
{ingster2002nonparametric}, Spokoiny \cite
{spokoiny1996adaptive}, Lepski and Spokoiny \cite
{lepski1999minimax}, Horowitz and Spokoiny \cite
{horowitz2001adaptive},
Pouet \cite{pouet2002test}, Fromont and Laurent \cite{fromont2002}. In this case, the null hypothesis is
reducible to a finite union of singletons. The papers that do \textit
{not} consider the case of a simple null hypothesis, such as
D{\"u}mbgen and Spokoiny \cite{dumbgen2001multiscale},
Juditsky and Nemirovski \cite
{juditsky2002nonparametric}, Baraud \textit{et al.} \cite
{baraud2005testing},
consider settings where it is provable that the separation rate $\rho
_n$ must be of the same order as the estimation rate in the alternative
hypothesis ($\rho_n \simeq n^{-t/(2t+1)}$ up to some $\log(n)$ factor).
In particular the gap between estimation and testing rate from which
the problem studied in the present paper arises does not exist, and
plug-in tests that are based on the distance between an estimate of the
function and the null hypothesis, are optimal in these cases. Blanchard \textit{et al.} \cite{blanchard2011testing}
consider a general multiple testing problem
where they test a continuum of null hypotheses. As in Bull and Nickl \cite{bull2011adaptive}, their separation rate
depends on the complexity of
the null hypothesis.
The papers \cite{gine2010confidence} and \cite{nickl2012confidence} consider a composite a non-parametric testing problem, and an approach
based on an infimum test. For the same reason as in the paper \cite{bull2011adaptive}, the complexity of the null
hypothesis affects the separation rates they obtain. Finally the papers \cite{belenik2008,DzieCmiel2014} consider directly
the problem of smoothness testing (or smoothness estimation for \cite{DzieCmiel2014}). However, their perspective
is different and the assumptions they make are very restrictive (for instance, piecewise smoothness, see \cite{DzieCmiel2014}).

%though, it matters to know the answer to this question. At a more high
%level, and after thinking about existing results for signal detection
%and confidence bands, it seems that the fact that there is a
%fundamental difference between the tests described in Equations
%hypothesis implies the difference in the order of the separation $
%considered in paper \citep{bull2011adaptive} is a so-called
%between $f$ and $\Sigma(s,B)$ by the infimum over any element $g$ of $
%concentration of a such statistic is a hard task since $\Sigma(s,B)$
%is an infinite dimensional object. In order to still control this
%infimum test statistic, one has to cover $\Sigma(s,B)$ with $L_2$balls
%and then use a chaining argument. For this reason, the covering number
%of $\Sigma(s,B)$ appears, and it implies that we need $\rho_n \geq D
%n^{-\frac{s}{2s+1}}$.

%More generally, in infinite dimensional testing problems, if one uses
%in a similar way tests that look like infimum tests, then one will
%observe that the complexity of the null hypothesis appears. One can
%then wonder if it really has to be so or not.
In this paper, we demonstrate that the complexity of the null
hypothesis does not influence the separation rate at least in the
testing problem \eqref{test}. % an answer for a such testing problem,
%i.e. the test described in Equation \eqref{test}.
%We prove, for this problem, that the complexity of the null hypothesis
%does \textit{not} appear in the order of the minimal separation $
More precisely, we prove that it is possible to build a test that is
uniformly consistent with a separation rate
\begin{eqnarray*}
\rho_n \simeq n^{-\afrac{t}{2t+1/2}}.
\end{eqnarray*}
%
%which is actually of same order as the $\rho_n$ in the case of the
%testing problem described in Equation \ref{test2}, i.e. where the null
%hypothesis is a point.\\
%The test we propose uses the geometric structure of Sobolev-type
%balls, and in fact is relatively easy to implement (at least compared
%to an infimum test).
The test we propose uses the geometric structure of the Sobolev-type
balls combined with a simple multiple testing idea, and is
straightforward to implement. Our proofs rely on the specific structure
of this problem, and in general whether or not the complexity of $H_0$
influences the separation rate depends heavily on the problem at hand.
%%(\citep{juditsky2002nonparametric} prove that for a different null
%hypothesis, also in $L_2$ norm, the separation rate ).

Section~\ref{sec:setting} formalises the setting and notations that we
consider. Section~\ref{sec:testprob} provides the test and the main
Theorems. Proofs are given in Sections~\ref{sec:proof} and \ref{sec:lowbound}.

%s2 #&#
\section{Setting}\label{sec:setting}

Denote by $L_2([0,1]) = L_2$ the space of functions defined on $[0,1]$
such that $\|f\|_2^2 = \int_0^1 |f(x)|^2 \,\mathrm{d}x < +\infty$, where $\|\cdot\|_2$
is the usual $L_2$ norm. For any functions $(f,g) \in L_2$, we consider
the usual scalar product $\langle f,g \rangle= \int_0^1 f(x) g(x) \,\mathrm{d}x$.

%s2.1 #&#
\subsection{Wavelet basis}

Let $S\geq0$. We consider the Cohen--Daubechies--Vial wavelet basis on
$[0,1]$ with $S$ first null moments (see Cohen \textit{et al.} \cite
{daub1993}), that we write
\begin{eqnarray*}
\{\phi_{k}, k \in Z_{J_0}, \psi_{l,k}, l >
J_0, l\in\mathbb N, k \in Z_l \},
\end{eqnarray*}
where $J_0 \equiv J_0(S)\in\mathbb N^*$ is a constant that grows with
$S$ (see Cohen \textit{et al.} \cite{daub1993}), where $\forall l
\geq J_0, Z_l \subset
\mathbb Z$, and where $\forall k' \in Z_{J_0}$, $\forall l >J_0,
\forall k \in Z_l$, $\phi_{k'}$ and $\psi_{l,k}$ are functions from
$[0,1]$ to $\mathbb R$.

The Cohen--Daubechies--Vial wavelet basis is an orthonormal basis of
functions on $[0,1]$. It is also such that
\begin{eqnarray*}
\forall l \geq J_0+1, \qquad |Z_l| = 2^l\quad
\mbox{and}\quad  z_0 \equiv Z_0(s) = |Z_{J_0}|<
\infty,
\end{eqnarray*}
where $\forall l \geq J_0$, $|Z_l|$ is the number of elements in the
set $Z_l$. Note that the constant $z_0$ grows with $S$ in the
definition of the Cohen--Daubechies--Vial wavelet basis, and is such that
$z_0 \geq1$. We write $\forall k \in Z_{J_0}, \psi_{J_0,k} = \phi_k$
in order to simplify notations. %The Cohen-Daubechies-Vial basis
%verifies all these conditions, see \citep{daub1993}.

%regularity doesn't exist, you can't have l starting at one AND
%|Z_l|=2^l. To use the CDV wavelets you need to start at some high
%enough resolution level J _0 (see for instance the book chapter I've
%written, or the paper with Adam). This is of course only notation but
%you have to be precise here!}

For any function $f \in L_2$, we consider the sequence $a \equiv a(f)$
of coefficients such that $\forall l\geq J_0, \forall k \in Z_l$,
\begin{eqnarray*}
a_{l,k} = \int_0^1
\psi_{l,k}(x) f(x) \,\mathrm{d}x = \langle\psi_{l,k},f \rangle.
\end{eqnarray*}
The functions $f \in L_2$ have the representation
%
%e2.1 #&#
\begin{equation}
\label{eq:sequfunc} f = \sum_{l\geq J_0} \sum
_{k \in Z_l} \psi_{l,k} \langle\psi_{l,k},f
\rangle= \sum_{l\geq J_0} \sum_{k \in Z_l}
a_{l,k} \psi_{l,k}.
\end{equation}
We moreover write for any $J\geq J_0$
\begin{eqnarray*}
\Pi_{V_J}(f) = \sum_{J_0\leq l\leq J} \sum
_{k \in Z_l} a_{l,k} \psi_{l,k}
\end{eqnarray*}
the projection of $f$ onto $V_J = \operatorname{span}(\psi_{l,k},
J_0\leq l \leq J, k \in Z_l)$ (where for any $A \subset L_2$,
$\operatorname{span}(A)$ is the vectorial sub-space generated by the
functions in $A$). We also write
\begin{eqnarray*}
\Pi_{W_J}(f) = \sum_{k \in Z_J} a_{J,k}
\psi_{J,k}
\end{eqnarray*}
the projection of $f$ onto $W_J = \operatorname{span}(\psi_{J,k}, k
\in Z_J)$.

%s2.2 #&#
\subsection{Besov spaces}

We consider, for $r>0$, the $(r,2,\infty)$-Besov (Nikolskii) norms
\begin{eqnarray*}
\|f\|_{r,2,\infty} = \sup_{l\geq J_0} \bigl(2^{lr}\bigl|
\langle f,\psi _{l,\cdot}\rangle\bigr|_{l_2} \bigr),
\end{eqnarray*}
where $|u|_{l_2} = (\sum_{i} u_i^2)^{1/2}$ is the sequential $l_2$
norm, and $l_2$ is the associated sequential space.
%Note that $\|.\|_{0,p} \leq c_1 \|.\|_p$ (see \citep{Ingster}).

The associated $(r,2,\infty)$-Besov (Nikolskii) spaces are defined as
\begin{eqnarray*}
B_{r,2,\infty} = \bigl\{f \in L^2\dvtx  \|f\|_{r,2,\infty}<+\infty
\bigr\}.
\end{eqnarray*}

We write for a given $r>0$ and a given $B>0$ the $B_{r,2,\infty}$ Besov
ball of smoothness $r$ and radius $B$ as
\begin{eqnarray*}
\Sigma(r,B) := \bigl\{f\in B_{r,2,\infty}\dvtx  \|f\|_{r,2,\infty}<B\bigr\}.
\end{eqnarray*}

Since the wavelet basis we considered to build the $(r,2,\infty)$-Besov
spaces is the Cohen--Daubechies--Vial wavelets with $S$ first null
moments, the defined $(r,2,\infty)$ Besov spaces correspond to the
functional $(r,2,\infty)$-Besov spaces (Sobolev-type spaces) for any $r
\leq S$, see Meyer \cite{meyer1992wavelets} and
H{\"a}rdle \textit{et al.} \cite{hardle1998wavelets}. %We assume
%that our basis verifies this property with $s \leq S$ where $s$ is the
%largest smoothness that we wish to consider in our testing
%problem.%This is the justification of why it makes sense to work with
%the spaces $B_{s,2,\infty}$.

\begin{Remark*} We chose to consider the Cohen--Daubechies--Vial wavelet
basis for simplicity and clarity in presentation, but any orthonormal
wavelet basis that is such that (i) the number of wavelets $|Z_l|$ at
each level $l$ is bounded by a constant time $2^l$ and (ii) the basis
can be used to characterize the functional $(r,2,\infty)$-Besov spaces
(Sobolev-type spaces), could have been used.
\end{Remark*}

%: this redefinition of the problem is the same as the one in paper
%Sobolev spaces, since they are defined in terms of the wavelet
%coefficients.

%Lofstrom and Besov \textit{et al.} cannot possibly contain anything about BEsov
%spaces and wavelets, as they are written in the 70s. You can cite
%Haedle et al, and perhaps also the book by Y. Meyer.}

%s2.3 #&#
\subsection{Observation scheme}

Let $n>0$. The data is a realisation of a Gaussian process defined for
any $x \in[0,1]$ as%and for a given $n>0$ as
\begin{eqnarray*}
\mathrm{d}Y^{(n)}(x) = f(x) \,\mathrm{d}x + \frac{\mathrm{d}B_x}{\sqrt{n}},
\end{eqnarray*}
where $(B_x)_{x\in[0,1]}$ is a standard Brownian motion, and $f \in
L_2$ is the function of interest.

Let us write for any $l \geq J_0$ and $k \in Z_l$ the associated
wavelet coefficients as
\begin{eqnarray*}
\hat a_{l,k}= \bigl\langle\psi_{l,k},\mathrm{d}Y^{(n)} \bigr
\rangle= \int_0^1 \psi _{l,k}(x)f(x)\,\mathrm{d}x
+ \frac{1}{\sqrt{n}}\int_0^1
\psi_{l,k}(x)\,\mathrm{d}B_x, \quad \mbox {and}\quad  a_{l,k}= \langle
\psi_{l,k},f \rangle,
\end{eqnarray*}
where for any $g \in L_2$, $\int_0^1 g(x)\,\mathrm{d}B_x$ is the usual stochastic
integral, and is as such distributed as a Gaussian random variable of
mean $0$ and variance $\|g\|_2^2$. Since the Cohen--Daubechies--Vial
wavelet basis is orthonormal, the coefficients $(\hat a_{l,k})_{l\geq
J_0,k \in Z_l}$ are jointly Gaussian random variables such that
\begin{eqnarray*}
(\hat a_{l,k})_{l\geq J_0,k \in Z_l} \sim\mathcal N \biggl((a_{l,k})_{l\geq J_0,k \in Z_l},
\biggl(\frac{1}{n}\mathbf1\bigl\{l=l', k=k'
\bigr\} \biggr)_{l\geq J_0,k \in Z_l,l'\geq J_0,k'\in Z_{l'}} \biggr),
\end{eqnarray*}
where $\mathcal N (\mu, \sigma^2)$ is the normal distribution of mean
$\mu$ and variance-covariance $\sigma^2$ (and where we write $X \sim
\mathcal N (\mu, \sigma^2)$ for stating that $X$ is such a Gaussian
distribution) and where $\mathbf1\{\,\cdot\,\}$ is the usual indicator function.

We consider the wavelet estimate of $f$:
\begin{eqnarray*}
\hat f_n =\sum_{l\geq J_0} \sum
_k \hat a_{l,k} \psi_{l,k}.
\end{eqnarray*}
This estimate is of infinite variance in $L_2$, hence projected estimates
\begin{eqnarray*}
\hat f_n(j) := \Pi_{V_j} \hat f_n,
\end{eqnarray*}
have to be considered.

In the sequel, we write $\Pr_f$ (respectively $\mathbb E_f$, and
$\mathbb V_f$) the probability (respectively expectation, and variance)
under the law of $\mathrm{d}Y^{(n)}$ when the function underlying the data is
$f$. When no confusion is likely to arise, we write simply $\Pr$
(respectively, $\mathbb E$ and $\mathbb V$).

\begin{Remark*} The spaces $B_{r,2,\infty}$ are slightly larger than
the usual Sobolev spaces,
see Bergh and L{\"o}fstr{\"o}m \cite
{bergh1976interpolation} and Besov \textit{et al.} \cite
{besov1978integral}. They are however
the natural objects to consider for a smoothness test, since they are
the largest Besov spaces where adaptive estimation remains possible
(see Donoho \textit{et al.} \cite{donoho1996density} and
Bull and Nickl \cite{bull2011adaptive}). Indeed, one
can prove
that there exists an estimate $\tilde f_n(Y^{(n)})$ of $f$ such that
for any $S \geq r >1/2$ and $B>0$, we have
\begin{eqnarray*}
\sup_{f \in\Sigma(r,B)} \mathbb E \|\tilde f_n - f
\|_2 \leq \mathrm{O}\bigl(n^{-r/(2r+1)}\bigr),
\end{eqnarray*}
see, for instance, Theorem~2 in the paper Bull and Nickl \cite{bull2011adaptive} (with
some simple modifications needed for the regression situation
considered in the present paper).
\end{Remark*}

%s3 #&#
\section{Testing problem}\label{sec:testprob}

%s3.1 #&#
\subsection{Formulation of the testing problem}

Let $S \geq s >t>0$ (we choose the Cohen--Daubechies--Vial wavelet basis
with $S$ first null moments with $S$ larger than $s$). We want to test
whether $f$ is in $\Sigma(s,B)$, or whether $f$ is outside this ball,
i.e., in $\Sigma(t,B) \setminus\Sigma(s,B)$. This is generally
impossible to do uniformly and functions that are $t$ smooth but too
close from $s$ smooth functions (such that the $L_2$ distance between
these functions and the Sobolev-type ball of smoothness $s$ is small)
have to be removed.

Let us first define the restriction of the sets $\Sigma(t, B)$ to sets
that are separated away from $\Sigma(s,B)$ by some minimal distance
$\rho_n>0$:
\begin{eqnarray*}
\tilde\Sigma(t, B,\rho_n) = \bigl\{f \in\Sigma(t,B)\dvtx  \bigl\|f
- \Sigma (s,B)\bigr\|_2 \geq\rho_n \bigr\},
\end{eqnarray*}
where we remind that for any set $G \subset L_2$, we have $\|f - G\|_2
= \inf_{g \in G}\|f - g\|_2$.

The testing problem is the following
\begin{eqnarray*}
H_0\dvtx  f \in\Sigma(s,B)\quad  \mbox{vs.}\quad  H_1\dvtx  f \in\tilde\Sigma(t,B,
\rho_n).
\end{eqnarray*}
When no confusion is likely to arise, we will use the short-hand
notation $f \in H_0$ for $f \in\Sigma(s,B)$, and $f \in H_1$ for $f
\in\tilde\Sigma(t,B, \rho_n)$.
%where we remind that $\Sigma(s,B)$ is the ball of radius $B$ of
%$B_{s,2,\infty}$ and $\tilde\Sigma(t,B, \rho_n) = \Sigma(t,B) - \{g:

%Paper \citep{bull2011adaptive} provides a method to test between the
%two hypotheses with bounder $\rho_n = O(\max(n^{-\frac{t}{2t+1/2}},
%n^{-\frac{s}{2s+1}}))$, and uses this method to build confidence bands
%around the function. Paper \citep{ingster1987minimax} proves that the
%minimal width of the bounder is $\rho_n= \Omega(n^{-
%possible to make these two bounder sizes match.

%This is in all generality impossible and it is proved in e.g.
%uniformly feasible, one has to remove functions of smoothness $t$ that
%are too close in $L_2$ norm to functions of smoothness $s$. The paper
%diameter of the ball to remove

%s3.2 #&#
\subsection{Main results}\label{ss:stats2}

Let $j \geq J_0$ such that $j = \lfloor1/(2t+1/2) \log(n)/\log
(2)\rfloor$, where $\lfloor\,\cdot\,\rfloor$ is the integer part of a
real number. In particular, this definition implies that
$n^{1/(2t+1/2)}/2 \leq2^j \leq n^{1/(2t+1/2)}$.

Consider for any $J_0 < l\leq j$ the test statistics
%
%e3.1 #&#
\begin{equation}
\label{eq:Tnl} T_n(l)= \|\Pi_{W_{l}} \hat f_n
\|_{2}^2 - \frac{2^l}{n}, \quad \mbox{and}\quad
T_n(J_0)= \|\Pi_{W_{J_0}} \hat f_n
\|_{2}^2 - \frac{z_0}{n}.
\end{equation}

These quantities $T_n(l)$ are estimates of $\|\Pi_{W_{l}} (f) \|_{2}^2$
across all levels $J_0 \leq l \leq j$. Concerning levels $l>j$, even in
the worst case of smoothness $t$, the $L_2$ norm of the function at
these levels is smaller than $n^{-t/(2t+1/2)}$, that is, $\|f - \Pi
_{V_j}(f)\|_2 =\mathrm{O}(n^{-t/(2t+1/2)})$. This implies that one does not need
to control for what happens at these levels.

Let $\alpha>0$ be the desired level of the test. %large enough (but
%depending only on $B, s, t$ and the desired level of the test $
Consider the positive constants $t_n(l)$ such that for any $J_0 \leq l
\leq j$
%
%e3.2 #&#
\begin{equation}
\label{def:Tnl} t_n(l)^2 = \biggl(\frac{B}{2^{ls}} +
\frac{\tau_{l}}{2} \biggr)^2 = \frac
{B^2}{2^{2ls}} + \frac{B}{2^{ls}}
\tau_l + \frac{\tau_l^2}{4},
\end{equation}
where the sequence $(\tau_l)_{J_0\leq l \leq j}$ is such that for any
$J_0 < l \leq j$
%
%e3.3 #&#
\begin{equation}
\label{def:Tl} \tau_l \equiv\tau_{n,l} = 24\sqrt{
\frac{z_0}{\alpha}} \frac
{2^{(j+l)/8}}{\sqrt{n}}, \quad \mbox{and}\quad  \tau_{J_0} \equiv
\tau_{n,J_0} = 24\sqrt{\frac{z_0}{\alpha}} \frac{1}{\sqrt{n}}.
\end{equation}
%
%where $C'\geq1$ is a universal constant.

%for $C(\alpha) \geq8(1+\frac{1}{B})\sqrt{\frac{\max(1,C')}{\alpha}}$,
%where $C'$ is some universal constant.

We consider the test:
\begin{eqnarray*}
\Psi_n(\alpha) = 1 - \prod_{J_0 \leq l \leq j}
\mathbf1 \bigl\{T_n(l) < t_n(l)^2\bigr\},
\end{eqnarray*}
where we remind that $\mathbf1\{\,\cdot\,\}$ is the usual indicator
function. We reject $H_0$ as soon as the test statistic at one of the
levels $J_0 \leq l\leq j$ indicates a too large Besov norm. The
intuition behind this test is that $f$ belonging to $\Sigma(s,B)$ is
equivalent to $\|\Pi_{V_l}(f)\|_{s,2,\infty}$ being smaller than or
equal to $B$ for any $l\geq J_0$. As explained before, we do not need
to be too concerned by what happens for $l>j$. In the case $J_0 \leq
l\leq j$, each statistic $T_n(l)$ is designed to test this. We
illustrate this in Figure~\ref{fig:expl}.
%
%f1 #&#
\begin{figure}

\includegraphics{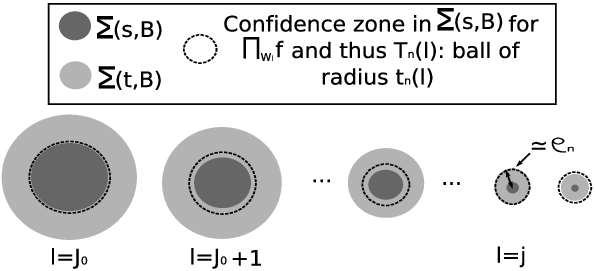}

\caption{Illustration of the testing problem.} \label{fig:expl}
\end{figure}

%for Haar wavelets and
% \Psi_n' = 1 - \ind{T_n'<(\tau_n')^{2}} \prod_{l \leq j_0} \ind{\tilde
%T_n^{(l)}<\tilde\tau_n^{(l)}}

%We will prove formally that this test is consistent in the sequel.

We provide the following definition of consistency for a test,
following the line of work of Ingster and Suslina
\cite{ingster2002nonparametric}.

%de3.1 #&#
\begin{defn}[($\alpha$-consistency)]\label{def:consist}
Let $\alpha>0$ and $H_0, H_1$ be two hypotheses (functional sets). Let
$\Psi_n(Y^{(n)},H_0, H_1, \alpha)$ be a test, that is to say a
measurable function taking values in $\{0,1\}$. We say that $\Psi_n$ is
$\alpha$-consistent if we have for any $n>0$
\begin{eqnarray*}
\sup_{f \in H_0} \mathbb E_f \Psi_n + \sup
_{f \in H_1} \mathbb E_f (1-\Psi_n) \leq
\alpha.
\end{eqnarray*}
\end{defn}

%The intuition behind this test is as follows. If one projects $f_n$
%(and thus $f$) on the span generated by wavelets of large resolution
%(i.e. $V_{j_0}$), then the variance of the $\|.\|_{s,2,\infty}$ norm
%of this projection is small (smaller than $2^{2ls}\rho_n^2 = o(1)$ if
%we project for all $l$) and one can then test if this norm is larger
%than $B$ or not. If one projects $f_n$ (and thus $f$) on the span
%generated by wavelets of small resolution (i.e. $V_j - V_{j_0}$), then
%$f$ belonging to $\Sigma(s,B)$ means that the $\|.\|_2$ norm of $f$ is
%small (smaller than $\rho_n$), and the testing problem that we
%consider is equivalent to testing the nullity of $f$.

We now state the main result of this paper.%, i.e. that the test we
%presented is consistent for $\rho_n = 2BCn^{-t/(2t+1/2)}$, for $C$
%large enough (but depending only on $ B, s, t$ and $\alpha$).

%th1 #&#
\begin{theorem}\label{thm:consist}
Let $\alpha>0$. The test $\Psi_n(\alpha)$ is an $\alpha$-consistent
test for discriminating between $H_0$ and $H_1$ and for $\rho_n =
\tilde C(\alpha)n^{-t/(2t+1/2)}$, where $\tilde C(\alpha) = 24(\frac
{2^tB}{\sqrt{1 - 2^{-2t}}}+19)\sqrt{\frac{1}{\alpha}}$.%, for $C'
%1$ some universal constant.
\end{theorem}

The proof of this theorem is in Section~\ref{sec:proof}. The region we
had to remove so that $\Psi_n(\alpha)$ is $\alpha$-consistent could not
have been taken significantly smaller, as stated in the next theorem.
%This implies in particular that $\Psi_n(\alpha)$ is a minimax optimal
%test.

%th2 #&#
\begin{theorem}\label{th:lb}
Let $1>\alpha\geq0$. There exists no $\alpha$-consistent test for
discriminating between $H_0$ and $H_1$ and for $\rho_n = \tilde
D(\alpha
)n^{-t/(2t+1/2)}$, where $\tilde D(\alpha) = \min ( (\frac
{1-\alpha}{2} )^{1/4}, B )$.% is a constant that depends only
%on
%$B$ and the level $\alpha$ of the test.%, and that is small enough.
\end{theorem}

The proof of Theorem~\ref{th:lb} is in Section~\ref{sec:lowbound}. It
is very similar to the proofs in papers Ingster \cite{ingster1987minimax} and
Bull and Nickl \cite{bull2011adaptive} (the proof in
paper Bull and Nickl \cite{bull2011adaptive} holds in
the more involved case of density estimation).

We would like to emphasise that the test $\Psi_n$, in addition to being
rather simple conceptually, is quite easy to implement since it
requires only the computation of (significantly) less than $n$
integrals/sums -- the empirical coefficients -- and less than $\log(n)$
sums of squares of these coefficients. It can replace the more
complicated infimum test considered in the paper Bull and Nickl \cite{bull2011adaptive} for the creation of
adaptive and honest confidence
bands.%It has indeed the advantages of the test described in

%s3.3 #&#
\subsection{Alternative settings}

We provided in the last subsection a consistent test on a model that
could not have been taken significantly larger. This test was
constructed in the rather simplistic setting of non-parametric Gaussian
homoscedastic regression with normalised variance. %There are several
%comments which can be made concerning this test, among which we
%distinguish two main directions. The first concerns extending the
%results that we have in this toy-setting to other more realistic
%situations, and the other concerns the application of these results to
%a \textit{computationally simple} construction of confidence bands.
But in many cases (see, e.g., Rei{\ss} \cite
{reiss2008asymptotic} and  Nussbaum \cite
{nussbaum1996asymptotic}), it has been proven that
it generalises rather well to more realistic and complex settings. The
concern in our case, however, is that we heavily rely on the \textit
{homoscedasticity} assumption with known variance of the noise. Indeed,
we subtract the constant part induced by this variance in the estimates
of $T_n(l)$ in equation \eqref{eq:Tnl}. This part is much larger than
the deviations (in high probability) of $\| \Pi_{W_l} \hat f_n\|_2^2$
around its mean, and it is thus crucial to remove it. We illustrate
this in Figure~\ref{fig:noi}.

There is however a way around this problem that we discuss now, as well
as generalizations to more complex settings.
%
%f2 #&#
\begin{figure}[b]

\includegraphics{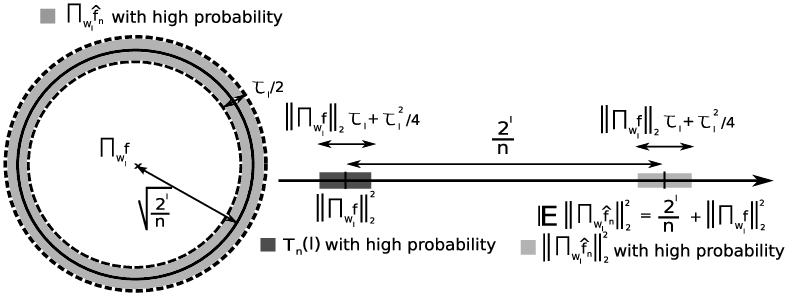}

\caption{Statistics $ T_n(l)$ and the removal of the expectation of the
square of the expectation of the noise.} \label{fig:noi}
\end{figure}

\emph{Heteroscedastic non-parametric Gaussian regression}. Assume now
that the data are generated according to the process
\begin{eqnarray*}
\mathrm{d}Y^{(n)}(x) = f(x) \,\mathrm{d}x + \frac{\sigma(x) \,\mathrm{d}B_x}{\sqrt{n}},
\end{eqnarray*}
where $(B_x)_{x\in[0,1]}$ is a standard Brownian motion, and $f,
\sigma
\in L_2$. Since the function $\sigma$ is unknown, we cannot apply the
technique we described. However, if we know a upper bound on $\|\sigma
\|
_2$, it is still possible to solve this problem with a very similar
technique.

The modification goes as follows. We start by dividing the initial
sample in two sub-samples of equal size $n/2$. Then we compute the
empirical estimates of the function in these two samples and write
$\hat f_n^{(1)}$ and $\hat f_n^{(2)}$ for the estimates of the function
computed in each of the two halves. We then define the statistics $\hat
T_n(l)$ (which play the same role as the $T_n(l)$) as
%
%e3.4 #&#
\begin{equation}
\label{eq:alterTnl} \hat T_n(l) = \bigl\langle\Pi_{W_l} \hat
f_n^{(1)} , \Pi_{W_l} \hat f_n^{(2)}
\bigr\rangle.
\end{equation}
Since $\hat f_n^{(1)}$ and $\hat f_n^{(2)}$ are independent estimates
of $f$, the additional term that comes from the expectation of the
square of the noise (the variance) disappears and it is possible to
prove that this newly defined $\hat T_n(l)$ concentrates around $\|\Pi
_{W_l} f\|_2^2$ with an error of same order as in Lemma~\ref
{lem:conc0pp} below. This implies that we can test in a similar way and
derive similar results.

\emph{Regression, density estimation and autoregressive model}. The
settings of non-parametric regression (with noise that can be
non-Gaussian), of non-parametric density estimation, and of
non-parametric auto-regressive model ($\AR(1)$) are not too different
from the heteroscedastic setting under a given set of assumptions (that,
e.g., the noise on the data is sub-Gaussian and that the design is
adapted for regression, and that, e.g., the regression function/density
is\vadjust{\goodbreak} bounded, see Bull and Nickl \cite
{bull2011adaptive}. This follows from the
asymptotic equivalence between these models and non-parametric Gaussian
regression (again, see, e.g., Rei{\ss} \cite
{reiss2008asymptotic} and Nussbaum \cite
{nussbaum1996asymptotic}).
\begin{itemize}[$\bullet$]
\item[$\bullet$] In the regression setting, we assume that the $n$ data $(X_i,
Y_i)_{i\leq n}$ are
\begin{eqnarray*}
Y_i = f(X_i) + \sigma(X_i)
\epsilon_i,
\end{eqnarray*}
where $\epsilon_i$ are independent random variables of mean $0$ and
variance $1$. Based on these data, we can compute also estimates for
the wavelet coefficients of $f$ as
\begin{eqnarray*}
\hat a_{l,k} = \frac{1}{n} \sum_{i=1}^n
Y_i\psi_{l,k}(X_i),
\end{eqnarray*}
and thus estimate $f$. Then we can follow the procedure described in
the setting of heteroscedastic non-parametric Gaussian regression
(equation \eqref{eq:alterTnl}). However, one needs to be careful in
this setting since the design (i.e., position of the $X_i$) is crucial.
Indeed, wavelets are very localised functions and estimating the
wavelet coefficients in a reasonably accurate way requires that the
points $X_i$ are spread over the whole domain, that is to say that
there are enough points in each region of the domain. In particular, a
standard random design will fail in this case, see H{\"a}rdle \textit{et al.} \cite{hardle1998wavelets}.
\item[$\bullet$] In the density estimation setting, we assume that the $n$ data
generated by $f$ are $(X_i)_i$, and estimate the wavelet coefficients
of $f$ as
\begin{eqnarray*}
\hat a_{l,k} = \frac{1}{n} \sum_{i=1}^n
\psi_{l,k}(X_i),
\end{eqnarray*}
and thus estimate $f$. Then we can follow the procedure described in
the setting of heteroscedastic non-parametric Gaussian regression
(equation \eqref{eq:alterTnl}).
\item[$\bullet$] We consider finally the non-parametric autoregressive model with
memory $1$ (or $\AR(1)$). The output $(X_i)_{i \leq n}$ of an $\AR(1)$
can be described as follows:
\begin{eqnarray*}
X_{i+1} = f(X_i) + \sigma(X_i)
\epsilon_i.
\end{eqnarray*}
After sub-sampling the data at random in order to make them close to
independent, one can go back to the regression setting, and apply the
same method (see, e.g., Hoffmann \cite
{hoffmann1999nonparametric} for
equivalence of this setting and regression setting after sub-sampling).
\end{itemize}

%An interesting feature of a such test is that it is (and also it
%remains, in all the variant described in the past Subsection) easy to
%implement. Indeed, it requires only the computation of a serie of less
%than $n$ integrals/sums (the estimates of the coefficients), and then
%then the computation of less than $\log(n)$ sums of squares. We
%believe that this can be exploited to compute confidence bands around
%the function in a computationally simpler way than what already
%exists. In $L_2$, paper \citep{bull2011adaptive} proposes also a such
%test, but that is computationally more involved, since it relies on
%minimising the distance between the estimate of the function and a
%Sobolev ball, i.e. an infinite dimensional object. We proved in this
%paper that it is possible to avoid that, and suggest thus, for more
%easy computations, to implement this test instead of the infimum test
%in paper \citep{bull2011adaptive}, when one wishes to construct
%confidence bands.

%white noise? I think you should elaborate here to explain what is
%going on. -) the remarks on p.9 in regression need to be made more
%careful: one can only use wavelets if the design is sufficiently
%equally spaces, particularly with random design wavelets will fail
%(see the discussion in haerdle \textit{et al.}.), as you need sufficiently many
%points everywhere for the wavelet approximation of the regression
%function to be good enough.}

%s4 #&#
\section{Proof of Theorem \texorpdfstring{\protect\ref{thm:consist}}{3.1}}\label{sec:proof}

This section contains a proof of Theorem~\ref{thm:consist}.

%s4.1 #&#
\subsection{Decomposition of the problem}

The statistics $T_n(l)$ are unbiased estimates of $\|\Pi_{W_{l}}(f)\|
_{2}^2$ for any $J_0 \leq l \leq j$, as explained later in this
section. Assuming this, the next lemma explains why the test $\Psi_n$
that we described is a reasonable thing to do.\vadjust{\goodbreak}

%le3 #&#
\begin{lem}\label{lem:justiftest}
Let $(\tau_l)_{J_0 \leq l\leq j}$ be a sequence of positive real
numbers. Assume that
\begin{eqnarray*}
\rho_n\geq\biggl(4 \frac{B}{\sqrt{1 - 2^{-2t}}} 2^{-jt} + \frac{4}{3} \sum
_{J_0\leq
l\leq j} \tau_l \biggr).
\end{eqnarray*}
Then we have
\begin{itemize}[$\bullet$]
\item[$\bullet$]$f \in H_0 \Rightarrow\max_{J_0 \leq l\leq j}  (\|\Pi
_{W_{l}}(f)\|_{2} - \frac{B}{2^{ls}} ) \leq0$.
\item[$\bullet$]$f \in H_1 \Rightarrow\max_{J_0 \leq l\leq j} (\|\Pi
_{W_{l}}(f)\|_{2} - \frac{B}{2^{ls}} - \tau_l  ) > 0$.
\end{itemize}
%
%Note that if $j$ and $j_0$ are such that $2^j \approx n^{-1/(2t+1/2)}$
%and $2^{j_0} \approx n^{-\frac{t}{s(2t+1/2)}}$, the condition on $
%minimax-optimal boundary condition.
\end{lem}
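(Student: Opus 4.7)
The first bullet is immediate from the definition of the Besov norm: if $f\in\Sigma(s,B)$, then $2^{ls}\|\Pi_{W_l}(f)\|_2\leq\|f\|_{s,2,\infty}<B$ for every $l\geq J_0$, so $\|\Pi_{W_l}(f)\|_2-B/2^{ls}\leq 0$ for all $J_0\leq l\leq j$. The work is entirely in the contrapositive of the second bullet.

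My plan is to argue by contraposition. Assume $\|\Pi_{W_l}(f)\|_2\leq B/2^{ls}+\tau_l$ for every $J_0\leq l\leq j$, and construct $g\in\Sigma(s,B)$ with $\|f-g\|_2<\rho_n$, which contradicts $f\in\tilde\Sigma(t,B,\rho_n)$. The natural candidate $g$ is a level-wise truncation of $f$: for $J_0\leq l\leq j$, set
\begin{equation*}
\Pi_{W_l}(g)=\begin{cases}\Pi_{W_l}(f),&\text{if }\|\Pi_{W_l}(f)\|_2\leq B/2^{ls},\\ \dfrac{B/2^{ls}}{\|\Pi_{W_l}(f)\|_2}\,\Pi_{W_l}(f),&\text{otherwise,}\end{cases}
\end{equation*}
and $\Pi_{W_l}(g)=0$ for $l>j$. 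By construction $\|\Pi_{W_l}(g)\|_2\leq B/2^{ls}$ for every $l\geq J_0$, so $g\in\Sigma(s,B)$ (up to a harmless $(1-\varepsilon)$ scaling to enforce the strict inequality in the definition of $\Sigma(s,B)$; the slack in the constants below absorbs this).

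Next I would estimate $\|f-g\|_2$ by decomposing along wavelet levels. The tail ($l>j$) is controlled using $f\in\Sigma(t,B)$:
\begin{equation*}
\sum_{l>j}\|\Pi_{W_l}(f)\|_2^{2}\leq B^{2}\sum_{l>j}2^{-2lt}=\frac{B^{2}\,2^{-2jt}}{4^{t}(1-2^{-2t})},
\end{equation*}
giving $\|f-\Pi_{V_j}(f)\|_2\leq\frac{B}{\sqrt{1-2^{-2t}}}2^{-jt}$. On the low levels $J_0\leq l\leq j$, either $\Pi_{W_l}(f)=\Pi_{W_l}(g)$ or $\|\Pi_{W_l}(f-g)\|_2=\|\Pi_{W_l}(f)\|_2-B/2^{ls}\leq\tau_l$ by hypothesis. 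Using orthogonality of the $W_l$'s together with $\sqrt{a+b}\leq\sqrt{a}+\sqrt{b}$ and $\sqrt{\sum\tau_l^{2}}\leq\sum\tau_l$ yields
\begin{equation*}
\|f-g\|_2\leq\frac{B}{\sqrt{1-2^{-2t}}}\,2^{-jt}+\sum_{J_0\leq l\leq j}\tau_l,
\end{equation*}
which is strictly below $\rho_n$ by the hypothesis of the lemma (the constants $4$ and $4/3$ leave ample room, in particular for the $(1-\varepsilon)$-rescaling).

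I do not expect any real obstacle; the construction is clean and the two bounds — a Besov tail estimate and a level-wise application of the working hypothesis — fit together without cancellation issues. The only subtle point is the strict inequality in the definition of $\Sigma(s,B)$, which is why I build in the small rescaling of $g$; the generous constants in the statement of the lemma are precisely what make this cosmetic step invisible.
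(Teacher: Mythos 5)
Your argument is correct and is essentially the paper's proof read contrapositively: where the paper bounds $\|f-\Sigma(s,B)\|_2$ by decoupling the infimum across levels (each level-wise distance being the Euclidean truncation $\max(0,\|\Pi_{W_l}f\|_2-B/2^{ls})$) and then invokes the pigeonhole principle over the $\tau_l$, you negate the conclusion, build the same level-wise truncated approximant $g$ explicitly, and deduce $\|f-g\|_2<\rho_n$. The tail estimate, the Euclidean-ball geometry, and the final arithmetic are identical, and your remark about the $(1-\varepsilon)$ rescaling to meet the strict inequality in the definition of $\Sigma(s,B)$ is a correct tidying of a point the paper glosses over.
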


\begin{pf}
\emph{Under the null Hypothesis $H_0$}.
If $f$ is in $\Sigma(s,B)$, then by definition of the Besov spaces
\begin{eqnarray*}
\|\Pi_{V_{j}} f\|_{s,2,\infty}\leq B,
\end{eqnarray*}
which implies by definition of the $\|\cdot\|_{0,2,\infty}$ norm that
\begin{eqnarray*}
\sup_{J_0\leq l\leq j} \biggl(\|\Pi_{W_l} f \|_{0,2,\infty} -
\frac{B
}{2^{ls}} \biggr) \leq0.
\end{eqnarray*}
This implies by Parseval's identity, and since $\|\Pi_{W_l} f \|
_{0,2,\infty} =\|\Pi_{W_l} f \|_{2}$
\begin{eqnarray*}
\sup_{J_0\leq l\leq j} \biggl(\|\Pi_{W_l} f \|_{2} -
\frac{B
}{2^{ls}} \biggr) = \sup_{J_0\leq l\leq j} \biggl(\|
\Pi_{W_l} f \|_{0,2,\infty} - \frac{B
}{2^{ls}} \biggr) \leq0.
\end{eqnarray*}

\emph{Under the alternative Hypothesis $H_1$}.
Assume that $f$ is in $\tilde\Sigma(t, B, \rho_n)$.
%We write
%for $g^*(f) = \arg\min_{g\in\Sigma(s,B)} \|f - g\|_{2}$ (it exists
%since $\Sigma(s,B)$ is a compact). More generally, we note in the
%sequel $g^*(u)$ for the minimiser in $u$.
By triangular inequality, we have
\begin{eqnarray*}
\inf_{g \in\Sigma(s,B)} \|f - g\|_{2} %&\leq\inf_{g \in\Sigma(s,B)}
&
\leq&\inf_{g \in\Sigma(s,B)}\bigl\|\Pi_{V_{j}} (f) - g\bigr\|_{2} +
\bigl\|f - \Pi _{V_j} (f)\bigr\|_{2}
\\
&\leq&\inf_{g \in\Sigma(s,B)} \bigl\|\Pi_{V_{j}} (f) - g
\bigr\|_{2} + \frac
{B}{\sqrt{1 - 2^{-2t}}}2^{-jt},
\end{eqnarray*}
since by definition of the $(t,2,\infty)$ Besov space, we know that
\begin{eqnarray*}
\bigl\|f - \Pi_{V_j} (f)\bigr\|_{2} \leq\sqrt{\sum
_{l=j+1}^{\infty} 2^{-2lt} B^2} \leq
\frac{B}{\sqrt{1 - 2^{-2t}}} 2^{-jt}.
\end{eqnarray*}
We thus have, since $\rho_n \leq\inf_{g \in\Sigma(s,B)} \|f - g\|
_{2}$ by definition of $\tilde\Sigma(t, B, \rho_n)$, and since $\rho_n
\geq(4 \frac{B}{\sqrt{1 - 2^{-2t}}} 2^{-jt} + 4/3 \sum_{J_0\leq
l\leq
j} \tau_l )$
%
%e4.1 #&#
\begin{eqnarray}
\label{eq:inequa} 3\rho_n/4 \leq\rho_n -
\frac{B}{\sqrt{1 - 2^{-2t}}}2^{-jt} \leq \inf_{g \in\Sigma(s,B)} \bigl\|
\Pi_{V_{j}} (f) - g\bigr\|_{2}.
\end{eqnarray}

%Also, by triangular inequality,
%since for any function $h$, and any $l\geq0$, $\|\Pi_{V_{l} -
%V_{l-1}} (h)\|_2 = \|\Pi_{V_{l} - V_{l-1}} (h)\|_{0,2, \infty}$.

Let us write $(a_{l,k})_{l,k}$ the coefficients of $f$ and
$(b_{l,k})_{l,k}$ the coefficients of the minimiser $g$. We have by
definition of $\Sigma(s,B)$, by the triangular inequality and by
Parseval's identity
\begin{eqnarray*}
&&\inf_{g \in\Sigma(s,B)}\bigl\|\Pi_{V_{j}} (f) -g\bigr\|_{2} \\
&&\quad \leq
\inf_{g \in\Sigma(s,B)} \sum_{l=J_0}^{j}
\bigl\|\Pi_{W_{l} } (f) - g\bigr\| _{2}
\\
&&\quad =\inf_{(b_{l,k})_{l,k} : \forall l \geq J_0, 2^{ls}\|b_{l,\cdot}\|_{l_2}
\leq B} \Biggl(\sum_{l=J_0}^{j}
\sqrt{\sum_{k\in Z_l} (a_{l,k} -
b_{l,k})^2} + \sum_{l=j+1}^{\infty}
\sqrt{\sum_{k\in Z_l} b_{l,k}^2}
\Biggr)
\\
%&= \inf_{(b_{l,k})_{l,k} : \forall l\geq J_0, 2^{ls}\|b_{l,.}\|_{l_2}
&&\quad = \sum_{l=J_0}^{j}
\inf_{(b_{l,k})_{l,k} : 2^{ls}\|b_{l,\cdot}\|_{l_2}
\leq B} \sqrt{\sum_{k\in Z_l}
(a_{l,k} - b_{l,k})^2},
\end{eqnarray*}
since the constraints defining the minimisation problems involved do
not interact across the levels~$l$. The last equation, together with
equation \eqref{eq:inequa}, implies that
\begin{eqnarray*}
3\rho_n/4 \leq\sum_{l=J_0}^{j}
\inf_{(b_{l,k})_{l,k} : 2^{ls}\|
b_{l,\cdot}\|_{l_2} \leq B} \sqrt{\sum_{k\in Z_l}
(a_{l,k} - b_{l,k})^2}.
\end{eqnarray*}
By definition, $\rho_n \geq4/3 \sum_{l=J_0}^{j} \tau_l$, so the last
equation implies that
%
%e4.2 #&#
\begin{eqnarray}
\label{eq:lastequhihi} \sum_{l=J_0}^{j}
\tau_l \leq\sum_{l=J_0}^{j} \inf
_{(b_{l,k})_{l,k} :
2^{ls}\|b_{l,\cdot}\|_{l_2} \leq B} \sqrt{\sum_{k\in Z_l}
(a_{l,k} - b_{l,k})^2}.
\end{eqnarray}

At least one of the $\tau_l$'s has to be less than or equal to
\begin{eqnarray*}
\inf_{(b_{l,k})_{l,k} : 2^{ls}\|b_{l,\cdot}\|_{l_2} \leq B} \sqrt{\sum_{k\in Z_l}
(a_{l,k} - b_{l,k})^2},
\end{eqnarray*}
as otherwise $\sum_{l=J_0}^{j} \tau_l$ would exceed the right-hand side
in equation \eqref{eq:lastequhihi}. Let $J_0 \leq l\leq j$ be one of
these indexes, we have%So there exists at least an integer $l$ such
%that $J_0 \leq l\leq j$ and also such that $\tau_l \leq
%(a_{l,k} - b_{l,k})^2}$ and thus
%
\begin{eqnarray*}
\tau_l &\leq&\inf_{(b_{l,k})_{l,k} : 2^{ls}\|b_{l,\cdot}\|_{l_2} \leq B} \sqrt{\sum
_{k\in Z_l} (a_{l,k} - b_{l,k})^2}
\\
&\leq&\max \biggl(0,\sqrt{\sum_{k\in Z_l}
a_{l,k}^2} - \frac
{B}{2^{ls}} \biggr)
\\
&\leq&\bigl\|\Pi_{W_l} (f)\bigr\|_{2} - \frac{B}{2^{ls}}
\end{eqnarray*}
since by definition of the Euclidian ball, for any $u \in l_2$, we have
$\inf_{v \in l_2: \|v\|_{l_2} = 1} \|u - v\|_{l_2} = \max(0, \|u\|
_{l_2} - 1)$.

This concludes the proof.
\end{pf}

%s4.2 #&#
\subsection{Convergence tools for $T_n(l)$}

The next lemma is a standard and also rather weak concentration
inequality (see, e.g., Birg{\'e} \cite
{birge2001alternative} for similar
results). %It is far from being optimal in terms of the probability of
%error,

%le4 #&#
\begin{lem}\label{lem:conc0pp}
Let $\Delta>0$. Then %There exists an universal constant $C'\geq1$
%such that
%+ 2^{l/4}\frac{\|\Pi_{W_{l}}f\|_{2}^2}{n} )} \Big\} \leq\Delta.
%
\begin{eqnarray*}
\Pr \biggl\{\forall l\dvtx  J_0 \leq l \leq j, \bigl|T_n(l) - \|
\Pi_{W_l} f\| _{2}^2 \bigr| \geq4\sqrt{
\frac{3z_0}{\Delta} \biggl(\frac{2^{(j+l)/2}}{n^{2}} + 2^{l/4}\frac{\|\Pi_{W_{l}}f\|_{2}^2}{n}
\biggr)} \biggr\} \leq\Delta.
\end{eqnarray*}
\end{lem}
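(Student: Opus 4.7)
The plan is to prove this as a standard second-moment (Chebyshev) bound at each level $l$, combined with a union bound across $J_0 \le l \le j$. The quantity $T_n(l)$ is a quadratic polynomial in independent Gaussians, and the only subtlety is that the threshold must be matched to the variance at each level so that the resulting probabilities sum (over dyadic levels) to at most $\Delta$. I anticipate that the bulk of the work is bookkeeping the geometric series and constants; there is no real difficulty once unbiasedness and the variance formula are in hand.

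\textbf{Step 1: unbiasedness.} Write $\hat a_{l,k} = a_{l,k} + \eta_{l,k}$, where $\eta_{l,k} = n^{-1/2}\int_0^1 \psi_{l,k}(x)\,dB_x$. By orthonormality of the Cohen-Daubechies-Vial basis the $\eta_{l,k}$ are i.i.d.\ $\mathcal N(0,1/n)$. Then
\begin{equation*}
\|\Pi_{W_l}\hat f_n\|_2^2 \;=\; \sum_{k\in Z_l}\hat a_{l,k}^2 \;=\; \sum_{k\in Z_l}\bigl(a_{l,k}^2 + 2a_{l,k}\eta_{l,k} + \eta_{l,k}^2\bigr),
\end{equation*}
so $\mathbb E\|\Pi_{W_l}\hat f_n\|_2^2 = \|\Pi_{W_l}f\|_2^2 + |Z_l|/n$. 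Since $|Z_l|=2^l$ for $l>J_0$ and $|Z_{J_0}|=z_0$, subtracting the correct constant in~\eqref{eq:Tnl} yields $\mathbb E T_n(l) = \|\Pi_{W_l}f\|_2^2$.

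\textbf{Step 2: variance.} Independence of the $\eta_{l,k}$ gives
\begin{equation*}
\mathrm{Var}(T_n(l)) \;=\; \sum_{k\in Z_l}\Bigl(\tfrac{4a_{l,k}^2}{n} + \tfrac{2}{n^2}\Bigr) \;=\; \tfrac{4\|\Pi_{W_l}f\|_2^2}{n} \;+\; \tfrac{2|Z_l|}{n^2},
\end{equation*}
using $\mathrm{Var}(\eta_{l,k}^2) = 2/n^2$ and $\mathrm{Cov}(2a_{l,k}\eta_{l,k},\eta_{l,k}^2)=0$ (odd moments of the Gaussian).

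\textbf{Step 3: Chebyshev at each level.} Set $u_l^2 := \tfrac{48 z_0}{\Delta}\bigl(\tfrac{2^{(j+l)/2}}{n^{2}} + 2^{l/4}\tfrac{\|\Pi_{W_l}f\|_2^2}{n}\bigr)$, which is exactly the square of the threshold in the statement. By Chebyshev,
\begin{equation*}
\Pr\bigl(|T_n(l)-\|\Pi_{W_l}f\|_2^2|\ge u_l\bigr) \;\le\; \frac{\mathrm{Var}(T_n(l))}{u_l^2}.
\end{equation*}
Using the elementary inequality $\tfrac{a+b}{c+d}\le \tfrac{a}{c}+\tfrac{b}{d}$ for $a,b,c,d>0$ with $a=4\|\Pi_{W_l}f\|_2^2/n$, $b=2|Z_l|/n^2$, $c=2^{l/4}\|\Pi_{W_l}f\|_2^2/n$, $d=2^{(j+l)/2}/n^2$ (multiplied by the common factor $48z_0/\Delta$), I obtain, for $l>J_0$,
\begin{equation*}
\frac{\mathrm{Var}(T_n(l))}{u_l^2} \;\le\; \frac{\Delta}{48 z_0}\Bigl(\tfrac{4}{2^{l/4}} + 2\cdot 2^{(l-j)/2}\Bigr),
\end{equation*}
and a completely analogous estimate at $l=J_0$, where $|Z_{J_0}|=z_0$ cancels the $z_0$ in the denominator.

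\textbf{Step 4: union bound and geometric sums.} Adding the bounds over $J_0\le l\le j$, the two resulting geometric series $\sum_{l\ge J_0}2^{-l/4}$ and $\sum_{l\le j}2^{(l-j)/2}$ are dominated by absolute constants (respectively $(1-2^{-1/4})^{-1}$ and $(1-2^{-1/2})^{-1}$), and since $z_0\ge 1$ the prefactor $1/(48 z_0)$ is small enough that the whole sum is bounded by $\Delta$. A union bound then yields
\begin{equation*}
\Pr\Bigl(\exists\, l\in[J_0,j]:\;|T_n(l)-\|\Pi_{W_l}f\|_2^2|\ge u_l\Bigr) \;\le\; \Delta,
\end{equation*}
which is the claim (up to interpreting the quantifier in the statement; the proof simultaneously yields the deviation bound uniformly in $l$). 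The only mildly delicate point is tracking that the separate treatment of $l=J_0$ (where $|Z_{J_0}|=z_0$ rather than $2^{J_0}$) does not spoil the constant, and that the asymmetric factors $2^{l/4}$ and $2^{(j+l)/2}$ inside $u_l^2$ are precisely what is needed to make both geometric sums converge.
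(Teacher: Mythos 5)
Your proof is correct and follows essentially the same route as the paper's: unbiasedness of $T_n(l)$, the exact (you) or slightly looser (the paper, $\le 4(|Z_l|/n^2+\|\Pi_{W_l}f\|_2^2/n)$) variance formula, a per-level Chebyshev bound, the mediant inequality $\tfrac{a+b}{c+d}\le\tfrac{a}{c}+\tfrac{b}{d}$ to split the deviation threshold into the two geometric pieces $2^{(j+l)/2}$ and $2^{l/4}$, and a union bound over $l$. The only organizational difference is that the paper first fixes per-level error budgets $\delta_l=(2^{-(j-l)/2}+2^{-l/4})\Delta/12$ and then verifies the resulting thresholds dominate the stated ones, whereas you plug the stated thresholds directly into Chebyshev and sum; these are the same computation read in opposite directions. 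You are also right that the displayed quantifier in the lemma is a typo for the complement (existential) event, which is the bound actually needed and the one you prove.
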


\begin{pf}
Let $J_0 < l\leq j$.
Note first that by Parseval's identity, we have $\|\Pi_{W_{l}} \hat
f_n\|_2^2 = \sum_k \hat a_{l,k}^2$. Then we have by definition $T_n(l)
= \sum_k \hat a_{l,k}^2 - \frac{ 2^l}{n}$.\vspace*{2pt} %Somehow, testing nullity
%of
%$\|\Pi_{V_j - V_{j_s}} f\|_{2}^2$ is equivalent to performing a $
%coefficient is in fact a $\chi_2$ test.

We have $\hat a_{l,k} = a_{l,k} + \hat a_{l,k} - a_{l,k}$ where $\hat
a_{l,k} - a_{l,k} \sim\mathcal N(0,1/n)$ (by assumption of the
Gaussian model), and thus we have
\begin{eqnarray*}
\mathbb E |\hat a_{l,k}|^{2} = \frac{1}{n} +
a_{l,k}^{2}.
\end{eqnarray*}
Also since for any constant $m \in\mathbb R$, and for $G \sim\mathcal N(0,1)$,
\begin{eqnarray*}
\mathbb V (G+m)^2 = \mathbb E \bigl(G^2 + 2Gm -1
\bigr)^2 = \mathbb E \bigl(G^4 + 4G^2m^2
+1 - 2G^2\bigr) = 4m^2 + 2 \leq4\bigl(1 + m^2
\bigr),
\end{eqnarray*}
we have
\begin{eqnarray*}
\mathbb V |\hat a_{l,k}|^{2} \leq4\biggl(\frac{1}{n^2}
+ \frac{a_{l,k}^{2}}{n}\biggr).
\end{eqnarray*}
This implies since the $\hat a_{l,k}$ are independent Gaussian random variables
\begin{eqnarray*}
\mathbb E \biggl( \sum_{k \in Z_l} \hat
a_{l,k}^2 \biggr) = \sum_{k \in
Z_l}
a_{l,k}^2 + \frac{2^l}{n},
\end{eqnarray*}
and
\begin{eqnarray*}
\mathbb V \biggl( \sum_{k \in Z_l} \hat
a_{l,k}^2 \biggr) \leq4 \biggl( \frac
{2^{l}}{n^2} +
\frac{\sum_{k \in Z_l} a_{l,k}^2}{n}\biggr).
\end{eqnarray*}
This implies by Chebyshev's inequality that for any $\delta_l>0$, we have
\begin{eqnarray*}
\Pr \biggl\{ \biggl| \sum_{k \in Z_l} \hat a_{l,k}^2-
\frac{2^l}{n} - \sum_{k \in Z_l} a_{l,k}^2
\biggr| \geq\sqrt{\frac{1}{\delta_l} 4\biggl(\frac
{2^l}{n^{2}} +
\frac{\sum_{k \in Z_l} a_{l,k}^2 }{n}\biggr)} \biggr\} \leq \delta_l
\end{eqnarray*}
and since $\|\Pi_{W_{l}}\hat f_n\|_{2}^2 = \sum_{k \in Z_l} \hat
a_{l,k}^2$ and $\|\Pi_{W_{l}} f\|_{2}^2 = \sum_{k \in Z_l} a_{l,k}^2$ that
\begin{eqnarray*}
\Pr \biggl\{ \biggl| \|\Pi_{W_{l}} \hat f_n\|_{2}^2
- \frac{2^l}{n} - \| \Pi _{W_{l} } f\|_{2}^2 \biggr|
\geq\sqrt{\frac{1}{\delta_l} 4\biggl(\frac
{2^l}{n^{2}} +
\frac{\|\Pi_{W_{l} }f\|_{2}^2}{n}\biggr)} \biggr\} \leq \delta_l.
\end{eqnarray*}
In the same way (since there are $z_0$ terms in $Z_{J_0}$), we have for
$l = J_0$, that for any $\delta_{J_0}>0$
\begin{eqnarray*}
\Pr \biggl\{\biggl | \|\Pi_{W_{J_0}} \hat f_n\|_{2}^2
- \frac{z_0}{n} - \| \Pi_{W_{J_0} } f\|_{2}^2]\biggr |
\geq\sqrt{\frac{1}{\delta_{J_0}} 4\biggl(\frac{z_0}{n^{2}} +
\frac{\|\Pi_{W_{J_0} }f\|_{2}^2}{n}\biggr)} \biggr\} \leq \delta_{J_0}.
\end{eqnarray*}

These two last results imply by definition of $T_n(l)$, that for any
$J_0 \leq l \leq j$
\begin{eqnarray*}
\Pr \biggl\{ \bigl| T_n(l) - \|\Pi_{W_{l}} f\|_{2}^2
\bigr| \geq\sqrt {\frac
{1}{\delta_l} 4\biggl( \frac{2^l}{n^{2}} +
\frac{\|\Pi_{W_{l}}f\|_{2}^2}{n}\biggr)} \biggr\} \leq\delta_l,
\end{eqnarray*}
and
\begin{eqnarray*}
\Pr \biggl\{ \bigl| T_n(J_0) - \|\Pi_{W_{J_0}} f
\|_{2}^2 \bigr| \geq \sqrt {\frac{1}{\delta_{J_0}} 4\biggl(
\frac{z_0}{n^{2}} + \frac{\|\Pi
_{W_{J_0}}f\|
_{2}^2}{n}\biggr)} \biggr\} \leq
\delta_{J_0}.
\end{eqnarray*}
These results imply by an union bound over all $J_0 \leq l\leq j$,
that we have
\begin{eqnarray*}
&&\Pr \biggl\{\forall l\dvtx  J_0 < l \leq j, \bigl|T_n(l) - \|
\Pi_{W_l} f\| _{2}^2 \bigr| \geq\sqrt{
\frac{1}{\delta_l} 4\biggl(\frac{2^{l}}{n^{2}} + \frac
{\|\Pi_{W_{l}}f\|_{2}^2 }{n} \biggr)},
\\
&&\hphantom{\Pr \biggl\{}\bigl| T_n(J_0) - \|\Pi_{W_{J_0}} f
\|_{2}^2 \bigr| \geq\sqrt{\frac
{1}{\delta_{J_0}} 4\biggl(
\frac{z_0}{n^{2}} + \frac{\|\Pi_{W_{J_0}}f\|
_{2}^2}{n}\biggr)} \biggr\} \leq\sum
_{J_0 \leq l \leq j}\delta_l.
\end{eqnarray*}

Set for any $J_0 < l \leq j$, $\delta_l = (2^{-(j-l)/2} + 2^{-l/4})
\Delta/12$, and $\delta_{J_0} = \Delta/12$. Then
\begin{eqnarray*}
&&\Pr \biggl\{\forall l\dvtx  J_0 < l \leq j, \bigl|T_n(l) - \|
\Pi_{W_l} f\| _{2}^2 \bigr| \geq4\sqrt{
\frac{3}{\Delta} \biggl(\frac
{2^{(j+l)/2}}{n^{2}} + 2^{l/4}\frac{\|\Pi_{W_{l}}f\|_{2}^2}{n}
\biggr)},
\\
&&\hphantom{\Pr \biggl\{}\bigl| T_n(J_0) - \|\Pi_{W_{J_0}} f
\|_{2}^2 \bigr| \geq4\sqrt{\frac
{3}{\Delta} \biggl(
\frac{z_0}{n^{2}} + \frac{\|\Pi_{W_{J_0}}f\|_{2}^2}{n}\biggr)} \biggr\} \leq\sum
_{J_0 \leq l \leq j}\delta_l \leq\Delta,
\end{eqnarray*}
since
\begin{eqnarray*}
\sum_{J_0 \leq l \leq j}\delta_l \leq
\frac{\Delta}{12} + \frac
{\Delta
}{12}\sum_{1 \leq l \leq j}
\bigl(2^{-(j-l)/2} + 2^{-l/4}\bigr) \leq\frac
{\Delta
}{12} \biggl(1 +
\frac{1}{1 - 2^{-1/2}} + \frac{1}{1 - 2^{-1/4}} \biggr) \leq\Delta.
\end{eqnarray*}
Since $z_0\geq1$, we have
\begin{eqnarray*}
\Pr \biggl\{\forall l\dvtx  J_0 \leq l \leq j, \bigl|T_n(l) - \|
\Pi_{W_l} f\| _{2}^2 \bigr| \geq4\sqrt{
\frac{3z_0}{\Delta} \biggl(\frac{2^{(j+l)/2}}{n^{2}} + 2^{l/4}\frac{\|\Pi_{W_{l}}f\|_{2}^2}{n}
\biggr)} \biggr\} \leq\Delta,
\end{eqnarray*}
which concludes the proof.
\end{pf}

%s4.3 #&#
\subsection{Study of the test}

%Let $\alpha>0$ be the desired level of the test.

Set $c \equiv c(\alpha) = 24\sqrt{\frac{z_0}{\alpha}}$, where we remind
that $\alpha>0$ is the desired level of the test. By definition of the
quantities $\tau_l$ (equation \eqref{def:Tl}), we have for any $J_0<
l\leq j$
\begin{eqnarray*}
\tau_{l} \equiv\tau_{n,l} = c \frac{2^{(j+l)/8}}{\sqrt{n}}, \quad \mbox
{and}\quad  \tau_{J_0} \equiv\tau_{n,J_0} = c \frac{1}{\sqrt{n}}.
%C
%{such} \mbox{that} C \geq24\sqrt{\frac{1}{\alpha}}. %large enough
%(but depending only on $\Delta, B, s, t$).
\end{eqnarray*}
We thus have
%
%e4.3 #&#
\begin{eqnarray}
\label{eq:bloublou} \sum_{l=J_0}^j
\tau_{l} \leq\sum_{l=0}^j c
\frac
{2^{(j+l)/8}}{\sqrt {n}} \leq\frac{c}{\sqrt{n}} 2^{j/4} \biggl(1 +
\frac{1}{1 -
2^{-1/8}} \biggr) \leq14cn^{-\afrac{t}{2t+1/2}}.
\end{eqnarray}

%We recall that $j$ is such that $2^{j} = n^{1/(2t+1/2)}$ and we set
%for any $J_0 \leq l\leq j$ the positive real numbers
%{for} C \mbox{such} \mbox{that} C \geq24\sqrt{\frac{1}{\alpha}}.
%%large enough (but depending only on $\Delta, B, s, t$).
%By definition of the quantities $\tau_l$ (Equation \eqref{def:Tl}), we
%have

Also, by definition of $\tilde C(\alpha)$ in Theorem~\ref{thm:consist},
we have
\begin{eqnarray*}
\rho_n = c\biggl(\frac{2^tB}{\sqrt{1 - 2^{-2t}}}+19\biggr) n^{-t/(2t+1/2)}.
\end{eqnarray*}
In particular this implies together with equation \eqref{eq:bloublou},
and since $2^j \leq2^t n^{\afrac{t}{2t+1/2}}$, that
%
%e4.4 #&#
\begin{eqnarray}
\label{eq:verufrohn} \rho_n \geq c \frac{B}{\sqrt{1 - 2^{-2t}}}2^{-jt} + \frac{4}{3}
\sum_{J_0
\leq l
\leq j} \tau_{l}.
\end{eqnarray}

%such that $\frac{\rho_n^2}{64} + \sqrt{\frac{1}{\delta} C'(c
%, i.e. such that $\frac{D}{2}(\rho_n/4)^{p} - 2\sqrt{\frac{1}{\delta}
%C_M\frac{2^{jp( 1- 1/p)}}{n^{p}}} \geq3D_M B^{p} 2^{-pj_s s} + 2\sqrt{
%(D_p + 2 C_{p/(p-1)}\sqrt{ \log(1/\delta)}) \sqrt{\frac{ 2^{j_s}}{n}}
%2^{j_s}}{n}}$.

%We set $\tau_n^2 =\frac{\rho_n^2}{16} \Big( 1 - \sqrt{\frac{1}{\delta}
%C'} \frac{4}{\sqrt{n} \rho_n}\Big) - \sqrt{\frac{1}{\delta} C'(c

%We assume that either the basis that we consider is the Haar wavelet
%basis, or that it verifies Assumption~\ref{ass:wav}.

%s4.3.1 #&#
\subsubsection{Null hypothesis}

Since $f \in\Sigma(s,B)$, by Lemma~\ref{lem:justiftest},
\begin{eqnarray*}
\max_{J_0 \leq l\leq j} \biggl(\|\Pi_{W_l}f\|_{2} -
\frac{B}{2^{ls}} \biggr)\leq0.
\end{eqnarray*}
Thus by Lemma~\ref{lem:conc0pp}, we have with probability at least
$1-\alpha/2$ that for any $J_0 \leq l \leq j$
\begin{eqnarray*}
T_n(l) &\leq&\|\Pi_{W_{l} } f\|_{2}^2 +
4\sqrt{\frac{6z_0}{\alpha} \biggl( \frac{2^{(l+j)/2}}{n^{2}} + 2^{l/4}
\frac{\|\Pi_{W_l}f\|_{2}^2}{n}\biggr)}
\\
&\leq&\frac{B}{2^{ls}} \biggl(\frac{B}{2^{ls}} + 4 \times2^{l/4}\sqrt
{\frac{6z_0}{\alpha n}} \biggr) + 4\sqrt{\frac{6z_0}{\alpha} } \frac
{2^{(j+l)/4}}{n}
\\
&\leq&\frac{B^2}{2^{2ls}} + 4\frac{B}{2^{ls}}\sqrt{\frac
{6z_0}{\alpha
} }
\frac{2^{(j+l)/8}}{n^{1/2}}+ 4\sqrt{\frac{6z_0}{\alpha} } \frac
{2^{(j+l)/4}}{n}
\\
&\leq& \biggl(\frac{B}{2^{ls}} + 4 \sqrt{\frac{6z_0}{\alpha} } \frac
{2^{(j+l)/8}}{n^{1/2}}
\biggr)^2
\\
&\leq& \biggl(\frac{B}{2^{ls}} + \tau_{l}/\sqrt{6}
\biggr)^2 < t_n(l)^2,
\end{eqnarray*}
since $c = 24\sqrt{\frac{z_0}{\alpha}}$, and by definition of $t_n(l)$
(see equation \eqref{def:Tnl}).
%for $C$ large enough but depending only on $s,t,\Delta,B$.

So with probability at least $1-\alpha/2$, we have $\Psi_n=0$ under $H_0$.

%s4.3.2 #&#
\subsubsection{Alternative hypothesis}

The sequence $(\tau_{l})_l$, and $\rho_n$ verify the assumptions of
Lemma~\ref{lem:justiftest} (see equation \eqref{eq:verufrohn}).

If $H_1$ is verified, then
\begin{eqnarray*}
\max_{J_0 \leq l\leq j} \biggl(\bigl\|\Pi_{W_{l}}(f)\bigr\|_{2} -
\frac{B}{2^{ls}} - \tau_{l} \biggr) > 0,
\end{eqnarray*}
see Lemma~\ref{lem:justiftest}. So there exists $J_0 \leq l\leq j$
such that
\begin{eqnarray*}
\bigl\|\Pi_{W_{l}}(f)\bigr\|_{2} \geq\frac{B}{2^{ls}} +
\tau_{l}.
\end{eqnarray*}

By Lemma~\ref{lem:conc0pp}, we have with probability at least
$1-\alpha
/2$ that for this $l$
\begin{eqnarray*}
T_n(l) &\geq&\|\Pi_{W_{l}} f\|_{2}^2 -
4\sqrt{\frac{6z_0}{\alpha
} \biggl(\frac{2^{(j+l)/2}}{n^{2}} + 2^{l/4}
\frac{\|\Pi_{W_{l}}f\|_{2}^2}{
n} \biggr)}
\\
&\geq& \biggl(\frac{B}{2^{ls}} +\tau_{l} \biggr) \biggl(
\frac{B}{2^{ls}} +\tau _{l} - 4\sqrt{\frac{6z_0}{\alpha} }
\frac{2^{l/4}}{n^{1/2}} \biggr) - 4\sqrt{\frac{6z_0}{\alpha}\frac{2^{(j+l)/2}}{n^{2}}}
\\
&\geq& \biggl(\frac{B}{2^{ls}} +\tau_{l} \biggr) \biggl(
\frac{B}{2^{ls}} + \tau _{l}/2 \biggr) - 4\sqrt{\frac{6z_0}{\alpha} }
\frac{2^{(j+l)/4}}{n}
\\
&\geq&\frac{B^2}{2^{2ls}} + \frac{B}{2^{ls}}\tau_{l} +
\tau_{l}^2/2 - 4\sqrt{\frac{6z_0}{\alpha} }
\frac{2^{(j+l)/4}}{n}
\\
&\geq&\frac{B^2}{2^{2ls}} + \frac{B}{2^{ls}}\tau_{l} + \tau
_{l}^2/4
\\
&\geq& \biggl(\frac{B}{2^{ls}} + \tau_{l}/2 \biggr)^2
=t_n(l)^2.
\end{eqnarray*}
since $c = 24\sqrt{\frac{z_0}{\alpha}}$, and by definition of $t_n(l)$
(see equation \eqref{def:Tnl}).% large enough but depending only on
%$s,t,\Delta,B$.

So with probability at least $1-\alpha/2$, we have $\Psi_n=1$ under $H_1$.

%pa4.3.2.1 #&#
\noindent\textit{Conclusion on the test $\Psi_n$}.
All the inequalities developed earlier are true for any $f$ in $H_0$ or
$H_1$ with constants depending only on $s,t,B,\alpha$ and the supremum
over $f$ in $H_0$ and $H_1$ of the error of type one and two are
bounded by $\alpha/2$. Finally, the test $\Psi_n$ of errors of type 1
and 2 bounded by $\alpha/2$ distinguishes between $H_0$ and $H_1$ with
condition $\rho_n = 24\sqrt{\frac{z_0}{\alpha}}(\frac{2^tB}{\sqrt {1 -
2^{-2t}}}+19) n^{-t/(2t+1/2)}$. This implies that\vspace*{-0.5pt}
\begin{eqnarray*}
\sup_{f \in\Sigma(s,B)} \mathbb E_f \Psi_n + \sup
_{f \in\tilde
\Sigma
(t, B, \rho_n)} \mathbb E_f (1-\Psi_n) \leq
\alpha.
\end{eqnarray*}
%
%for $n$ large enough.

%By definition of $2\Delta$-consistency, this test is $2

%s5 #&#
\section{Proof of Theorem \texorpdfstring{\protect\ref{th:lb}}{3.2}}\label{sec:lowbound}

Let $B>0$, $s>t>0$, $\min(1,B)> \upsilon>0$, and $j \in\mathbb N^*$
such that $j = \lfloor1/(2t+1/2) \log(n)/\allowbreak \log(2)\rfloor$, where
$\lfloor\,\cdot\,\rfloor$ is the integer part of a real number. In
particular, this definition\vspace*{1.5pt} implies that $n^{1/(2t+1/2)}/2 \leq2^j
\leq n^{1/(2t+1/2)}$.

%pa5.subsection.subsubsection.1 #&#
\noindent\textit{Step 1}: \textit{Definition of a testing problem on some large set}.
Define the set\vspace*{-1pt}
\begin{eqnarray*}
I \equiv I_j = \bigl\{(\alpha_{l,k})_{l\geq J_0,k \in Z_l}\dvtx
\forall l \neq j, \alpha_{l,k} =0, \alpha_{j,k}\in\{-1,1\}
\bigr\}.
\end{eqnarray*}
Consider the sequence of coefficients indexed by a given $\alpha\in I$ as\vspace*{-1pt}
\begin{eqnarray*}
a_{l,k}^{(\alpha)} = \upsilon a \alpha_{l,k},
\end{eqnarray*}
where $a =\frac{1}{\sqrt{n} 2^{j/4}}$. Consider the function associated
to $a^{(\alpha)}$ that we write $f^{(\alpha)}$ and that we define as\vspace*{-1pt}
\begin{eqnarray*}
f^{(\alpha)} = \sum_{l=J_0}^{\infty} \sum
_{k \in Z_l} a^{(\alpha
)}_{l,k}
\psi_{l,k} = \sum_{k \in Z_j} a^{(\alpha)}_{j,k}
\psi_{j,k}.
\end{eqnarray*}

Consider the testing problem\vspace*{-0.5pt}
%
%e5.1 #&#
\begin{equation}
\label{test3} H_0\dvtx  f=0 \quad \mbox{vs.}\quad  H_1\dvtx  f = f^{(\alpha)},\qquad
\alpha\in I.
\end{equation}
%
%We want to prove that this test is impossible to perform if $\upsilon$
%is small.

%pa5.subsection.subsubsection.2 #&#
\noindent\textit{Step 2}: \textit{Quantity of interest}.
An observation in the white noise model is equivalent, by sufficiency
considerations, to an observation of empirical coefficients:
equivalently to having access to the process $Y^{(n)}$, we have access
to the empirical coefficients $(\hat a_{l,k})_{l,k}$ (where $\hat
a_{l,k} = \int\psi_{l,k} \,\mathrm{d}Y^{(n)}$) and each of these coefficients are
independent $\mathcal N(a_{l,k},1/n)$. Let $\Psi$ be a test, i.e., some
measurable function (according to the empirical coefficients) taking
values in $\{0,1\}$.

We have for any $\eta>0$ (using the notations $\Pr_0$ and $\mathbb E_0$
for the probability and expectation when the data are generated with $f=0$)\vspace*{-0.5pt}
%
%e5.2 #&#
\begin{eqnarray}\label{eq:boundZ}
\mathbb E_0[\Psi] + \sup_{f^{(\alpha)}, \alpha\in I}\mathbb
E_{f^{(\alpha)}}[1-\Psi] &\geq&\mathbb E_0[\Psi] +
\frac{1}{|I|} \sum_{\alpha\in I}\mathbb
E_{f^{(\alpha)}}[1-\Psi]
\nonumber
\\
&\geq&\mathbb E_0 \bigl[ \mathbf1\{\Psi=1\}\bigr] + \mathbf1 \{
\Psi=0\} Z
\\
&\geq&(1-\eta) {\Pr}_0(Z \geq1-\eta), \nonumber
\end{eqnarray}
where $Z= \frac{1}{|I|} \sum_{\alpha\in I} \prod_{l,k} \frac
{\mathrm{d}P_{l,k}^{(\alpha)}}{\mathrm{d}P_{l,k}^{0}}$, where $\mathrm{d}P_{l,k}^{(\alpha)}$ is
the density of $\hat a_{l,k}$ when the function generating the data is
$f^{(\alpha)}$, and $\mathrm{d}P_{l,k}^{0}$ is the density of $\hat a_{l,k}$
when the function generating the data is $0$ (this holds since the
$(\hat a_{l,k})_{l,k}$ are independent).

More precisely, we have since the $(\hat a_{l,k})_{l,k}$ are
independent $\mathcal N(a_{l,k},1/n)$%, and since $\alpha_k^2 = 1$, that
\begin{eqnarray*}
Z\bigl((x_{k})_{k}\bigr) &\equiv& Z\bigl((x_{l,k})_{l,k}
\bigr) = \frac{1}{|I|} \sum_{\alpha
\in I} \prod
_{l,k} \frac{\exp(\sklfrac{-n}{2}(x_{l,k} -
a_{l,k}^{(\alpha
)})^2)}{\exp(\sklfrac{-n}{2}x_{l,k}^2)}
\\
&=& \frac{1}{|I|} \sum_{\alpha\in I} \prod
_{k\in Z_j} \exp\bigl(n x_{k} a_{k}^{(\alpha)}
\bigr) \exp\biggl(- \frac{n}{2} \bigl(a_{k}^{(\alpha)}
\bigr)^2\biggr), %&= \frac{1}{|I|} \exp(\frac{1}{2} \sum_{l=j_s}^j \sum_k (\upsilon
%2^{-jt}\frac{2^{-l(1/2 - 1/p + 1/(Kp))}}{(j-j_s)^{1/p}})^2) \sum_{
%&= \frac{1}{|I|} \exp(\frac{1}{2} \sum_{l=j_s}^j \upsilon^2 2^{-2jt}
%I} \prod_{l=j_s}^j \prod_k \exp(x_{l,k} a_{l,k}^{\alpha})\\
%&= \frac{1}{|I|} \exp(\frac{1}{2} \upsilon^2 2^{-2jt}\frac{2^{-j_s(1 -
%2/p)(1 - 1/K)}}{(j-j_s)^{1/p}}) \sum_{\alpha\in I} \prod_{l=j_s}^j
%&= \frac{1}{|I|} \exp(\frac{1}{2} \big(\upsilon^2
%2^{-2jt}(j-j_s)^{1-2/p}\big)) \sum_{\alpha\in I} \prod_{l=j_s}^j
\end{eqnarray*}
where $(x_{k})_{k} \equiv(x_{j,k})_{k}$ and $(a_{k}^{(\alpha)})_k
\equiv(a_{k}^{(\alpha)})_{k}$. In the rest of the proof, we write also
$(\alpha_k)_k \equiv(\alpha_{j,k})_k$ in order to simplify notations.

By Markov and Cauchy Schwarz's inequality
%
%e5.3 #&#
\begin{eqnarray}
\label{eq:mark} {\Pr}_0(Z \geq1-\eta) \geq1 - \frac{\mathbb E_0|Z-1|}{\eta} \geq
1 - \frac{\sqrt{\mathbb E_0(Z-1)^2}}{\eta}.
\end{eqnarray}

%and note that $\upsilon^2 2^{-2jt}\frac{2^{-j_s(1 - 2/p)(1 -
%1/K)}}{(j-j_s)^{1/p}} \leq\upsilon^2 2^{-2jt}$ so these class $a^{
%2^{-2jt}\}$ which is already not distinguishable for $\upsilon$ small
%enough.

%Note also that $|a^{\alpha}|_p \geq\upsilon2^{-jt}
%impossible to distinguish for border smaller than $(\log(n))^{??} n^{-
%not the right one since we want that the coefficients are in $H_1$
%which is not the case here)?

%pa5.subsection.subsubsection.3 #&#
\noindent\textit{Step 3}: \textit{Study of the term in $Z$}.
We have by definition of $Z$
\begin{eqnarray*}
&&\mathbb E_0 \bigl[(Z - 1)^2 \bigr]
\nonumber
\\
&&\quad = \int_{x_1,\ldots, x_{2^j}} \biggl(\frac{1}{|I|} \sum
_{\alpha\in I} \prod_k \exp
\bigl(x_{k} n a_{k}^{(\alpha)} \bigr) \exp \biggl(-
\frac{n}{2} \bigl(a_{k}^{(\alpha)}\bigr)^2
\biggr) - 1 \biggr)^2 \\
&&\hphantom{\quad = \int_{x_1,\ldots, x_{2^j}}}{}\times\prod_k
\frac{1}{\sqrt{2n\uppi
}}\exp \biggl(- \frac{n}{2} (x_{k})^2
\biggr) \,\mathrm{d}x_1\cdots x_{2^j}
\nonumber
\\
&&\quad = \int_{x_1,\ldots, x_{2^j}} \biggl(\frac{1}{|I|} \sum
_{\alpha\in I} \prod_k \exp
\bigl(x_{k} n a_{k}^{(\alpha)} \bigr) \exp \biggl(-
\frac{n}{2} \bigl(a_{k}^{(\alpha)}\bigr)^2
\biggr) \biggr)^2 \\
&&\hphantom{\quad = \int_{x_1,\ldots, x_{2^j}}}{}\times\prod_k
\frac{1}{\sqrt{2n\uppi
}}\exp \biggl(- \frac{n}{2} (x_{k})^2
\biggr) \,\mathrm{d}x_1\cdots x_{2^j}
\nonumber
\\
&&\qquad {}- 2 \int_{x_1,\ldots, x_{2^j}} \frac{1}{|I|} \sum
_{\alpha\in I} \prod_k \exp
\bigl(x_{k} n a_{k}^{(\alpha)} \bigr) \exp \biggl(-
\frac{n}{2} \bigl(a_{k}^{(\alpha)}\bigr)^2
\biggr)\\
&&\hphantom{\qquad {}- 2 \int_{x_1,\ldots, x_{2^j}}}{}\times \prod_k \frac{1}{\sqrt{2n\uppi}}\exp \biggl(-
\frac
{n}{2} (x_{k})^2 \biggr) \,\mathrm{d}x_1\cdots x_{2^j}
+1
\nonumber
\\
&&\quad = \int_{x_1,\ldots, x_{2^j}} \biggl(\frac{1}{|I|} \sum
_{\alpha\in I} \prod_k \exp
\bigl(x_{k} n a_{k}^{(\alpha)} \bigr) \exp \biggl(-
\frac{n}{2} \bigl(a_{k}^{(\alpha)}\bigr)^2
\biggr) \biggr)^2 \\
&&\hphantom{\quad = \int_{x_1,\ldots, x_{2^j}}}{}\times\prod_k
\frac{1}{\sqrt{2n\uppi
}}\exp \biggl(- \frac{n}{2} (x_{k})^2
\biggr)\, \mathrm{d}x_1\cdots x_{2^j}
\nonumber
\\
&&\qquad {}- 2 \frac{1}{|I|} \sum_{\alpha\in I} \prod
_k \int_{x_k} \frac
{1}{\sqrt{2n\uppi}}\exp
\biggl(- \frac{n}{2} \bigl(x_{k} - a_k^{(\alpha
)}
\bigr)^2 \biggr) \,\mathrm{d}x_k +1
\nonumber
\\
&&\quad = \int_{x_1,\ldots, x_{2^j}} \biggl(\frac{1}{|I|} \sum
_{\alpha\in I} \prod_k \exp
\bigl(x_{k} n a_{k}^{(\alpha)} \bigr) \exp \biggl(-
\frac{n}{2} \bigl(a_{k}^{(\alpha)}\bigr)^2
\biggr) \biggr)^2 \\
&&\hphantom{\quad = \int_{x_1,\ldots, x_{2^j}}}{}\times\prod_k
\frac{1}{\sqrt{2n\uppi
}}\exp \biggl(- \frac{n}{2} (x_{k})^2
\biggr) \,\mathrm{d}x_1\cdots x_{2^j} -1
\nonumber
\end{eqnarray*}
by Fubini--Tonelli. This implies by developing the first term that\vspace*{-1pt}
\begin{eqnarray*}
&&\mathbb E_0 \bigl[(Z - 1)^2 \bigr]
\nonumber
\\[-1pt]
&&\quad = \frac{1}{|I|^2} \biggl( \sum_{\alpha, \alpha' \in I} \int
_{x_1...,x_{2^j}} \prod_k \exp \bigl(
x_{k} n \bigl(a_{k}^{(\alpha)} + a_{k}^{(\alpha')}
\bigr) \bigr) \exp \biggl(- \frac{n}{2}\bigl(\bigl(a_{k}^{(\alpha
)}
\bigr)^2 + \bigl(a_{k}^{(\alpha')}\bigr)^2
\bigr) \biggr)
\nonumber
\\[-1pt]
&&\hphantom{\quad = \frac{1}{|I|^2} \biggl( \sum_{\alpha, \alpha' \in I} \int
_{x_1...,x_{2^j}} } {}\times\frac{1}{\sqrt{2n\uppi}}\exp \biggl(- \frac{n}{2} (x_{k})^2
\biggr)\,\mathrm{d}x_1\cdots \mathrm{d}x_{2^j} \biggr) -1
\nonumber
\\[-1pt]
&&\quad = \frac{1}{|I|^2} \biggl( \sum_{\alpha, \alpha' \in I} \prod
_k \int_{x_k} \exp \bigl(
x_{k} n \bigl(a_{k}^{(\alpha)} + a_{k}^{(\alpha')}
\bigr) \bigr) \exp \biggl(- \frac{n}{2}\bigl(\bigl(a_{k}^{(\alpha)}
\bigr)^2 + \bigl(a_{k}^{(\alpha
')}\bigr)^2
\bigr) \biggr)
\nonumber
\\[-1pt]
&&\hphantom{\quad = \frac{1}{|I|^2} \biggl(\sum_{\alpha, \alpha' \in I} \prod
_k \int_{x_k}}{}\times\frac{1}{\sqrt{2n\uppi}}\exp \biggl(- \frac{n}{2} (x_{k})^2
\biggr)\,\mathrm{d}x_k \biggr) -1
\nonumber
\\[-1pt]
&&\quad = \frac{1}{|I|^2} \biggl( \sum_{\alpha, \alpha' \in I} \prod
_k \int_{x_k} \exp \bigl(
x_{k} n \upsilon a \bigl(\alpha_{k}+ \alpha_{k}'
\bigr) \bigr) \exp \bigl(- n\upsilon^2 a^2 \bigr)\\
&&\hspace*{98pt}{}\times
\frac{1}{\sqrt{2n\uppi}}\exp \biggl(- \frac
{n}{2} (x_{k})^2
\biggr)\,\mathrm{d}x_k \biggr) -1.
\nonumber
\end{eqnarray*}
This implies by integrating depending on the respective values of
$\alpha_k$ and $\alpha_k'$ that\vspace*{-1pt}
%
%e5.4 #&#
\begin{eqnarray}\label{eq:ZZZ}
&&\mathbb E_0 \bigl[(Z - 1)^2 \bigr]
\nonumber
\\[-1pt]
&&\quad = \frac{1}{|I|^2} \biggl[ \sum_{\alpha, \alpha' \in I} \prod
_k \biggl( \exp \bigl(n \upsilon^2
a^2 \bigr) \mathbf1\bigl\{\alpha_k =
\alpha_k' = 1\bigr\} \int_{x_k}
\frac{1}{\sqrt{2n\uppi}}\exp \biggl(- \frac{n}{2} (x_{k} - 2\upsilon
a)^2 \biggr)\,\mathrm{d}x_k
\nonumber
\\[-1pt]
&&\hphantom{\quad = \frac{1}{|I|^2} \biggl[ \sum_{\alpha, \alpha' \in I} \prod
_k \biggl(}{}+ \exp \bigl(n \upsilon^2 a^2 \bigr) \mathbf1\bigl\{
\alpha_k = \alpha_k' = -1\bigr\}\nonumber\\ [-1pt]
&&\hphantom{\quad = \frac{1}{|I|^2} \biggl[ \sum_{\alpha, \alpha' \in I} \prod
_k \biggl({}+}{}\times \int
_{x_k} \frac{1}{\sqrt{2n\uppi}}\exp \biggl(- \frac{n}{2}
(x_{k} + 2\upsilon a)^2 \biggr)\,\mathrm{d}x_k
\nonumber\\[-9pt]\\[-9pt]
&&\hphantom{\quad = \frac{1}{|I|^2} \biggl[ \sum_{\alpha, \alpha' \in I} \prod
_k \biggl(}{}+ \exp \bigl(-n \upsilon^2 a^2 \bigr) \mathbf1\bigl\{
\alpha_k \neq\alpha _k'\bigr\} \int
_{x_k} \frac{1}{\sqrt{2n\uppi}}\exp \biggl(- \frac{n}{2}
x_{k}^2 \biggr)\,\mathrm{d}x_k \biggr) \biggr] -1
\nonumber
\\[-1pt]
&&\quad = \frac{1}{|I|^2} \biggl( \sum_{\alpha, \alpha' \in I} \prod
_k \bigl( \exp \bigl(-n \upsilon^2
a^2 \bigr) \bigl(1 - \mathbf1\bigl\{\alpha_k \neq \alpha
_k'\bigr\} \bigr) + \exp \bigl(n \upsilon^2
a^2 \bigr) \mathbf1\bigl\{\alpha_k \neq
\alpha_k' \bigr\} \biggr) -1.\nonumber
% \nonumber\\
%&= \exp(-n 2^j\upsilon^2 a^2) \frac{1}{|I|^2} \sum_{\alpha, \alpha'
\end{eqnarray}
Since the $\alpha$ and $\alpha'$ take respectively all possible values
in $\{-1,1\}^{2^j}$, by definition of the expectation, and by replacing
$\alpha$ and $\alpha'$ by $R$ and $R'$ in the formula, we have
\begin{eqnarray*}
\frac{1}{|I|^2} \sum_{\alpha, \alpha' \in I} [\,\cdot\,] = \mathbb
E_{(R_i)_i,
(R_j')_j}[\,\cdot\,],
\end{eqnarray*}
where the $(R_i)_i, (R_j')_j$ are two sequences of i.i.d. Rademacher
random variables that are also independent of each other, and where
$\mathbb E_{(R_i)_i, (R_j')_j}[\,\cdot\,]$ is the expectation according to
these random variables. This implies together with equation \eqref
{eq:ZZZ} that
\begin{eqnarray*}
&&\mathbb E_0 \bigl[(Z - 1)^2 \bigr]
\nonumber
\\
&&\quad = \mathbb E_{(R_i)_i, (R_j')_j} \biggl[ \prod_k
\bigl( \exp \bigl(-n\upsilon^2 a^2 \bigr) \bigl(1 -
\mathbf1\bigl\{R_k \neq R_k' \bigr\}
\bigr) + \exp \bigl(n \upsilon^2 a^2 \bigr)\mathbf1\bigl
\{R_k \neq R_k' \bigr\} \bigr) \biggr] - 1
\nonumber
\\
&&\quad = \prod_k \mathbb E_{R_k, R_k'} \bigl[ \exp
\bigl(-n \upsilon^2 a^2 \bigr) \bigl(1 - \mathbf1\bigl
\{R_k \neq R_k' \bigr\} \bigr) + \exp
\bigl(n \upsilon^2 a^2 \bigr) \mathbf1\bigl
\{R_k \neq R_k' \bigr\} \bigr] - 1,
\nonumber
\end{eqnarray*}
since all $R_k$, $R_k'$ are independent of each other. Moreover,
$\mathbf1\{R_k \neq R_k' \}$ is a Bernoulli random variable of
parameter $1/2$ (since the two Rademacher are independent), which implies
\begin{eqnarray*}
\mathbb E_0 \bigl[(Z - 1)^2 \bigr] &=& \prod
_k \mathbb E_{B} \bigl[ \exp \bigl(-n
\upsilon^2 a^2 \bigr) (1-B) + \exp \bigl( n
\upsilon^2 a^2 \bigr) B \bigr] - 1
\nonumber
\\
&=& \bigl(\mathbb E_{B} \bigl[ \exp \bigl(-n \upsilon^2
a^2 \bigr) (1-B) + \exp \bigl(n \upsilon^2
a^2 \bigr) B \bigr] \bigr)^{2^j} - 1,
\nonumber
\end{eqnarray*}
where $\mathbb E_{B}[\,\cdot\,]$ is the expectation according to a Bernoulli
random variable with parameter $1/2$. The last equation implies
\begin{eqnarray*}
\mathbb E_0 \bigl[(Z - 1)^2 \bigr] &=& \biggl(
\frac{\exp (-n
\upsilon^2
a^2 ) + \exp (n \upsilon^2 a^2 )}{2} \biggr)^{2^j} - 1
\nonumber
\\
&\leq& \biggl( \frac{1 -n \upsilon^2 a^2 + (n \upsilon^2 a^2)^2 + 1 + n
\upsilon^2 a^2 + (n \upsilon^2 a^2)^2}{2} \biggr)^{2^j} - 1
\nonumber
\\
&\leq& \bigl( 1 + \bigl(n \upsilon^2 a^2
\bigr)^2 \bigr)^{2^j} - 1,
\nonumber
%&= \exp(-n2^j \upsilon^2 a^2) \Big( 1 + 2n \upsilon^2 a^2 \Big)^{2^j}
%-1. \nonumber
\end{eqnarray*}
since for any $|u| \leq1$, we have $\exp(u) \leq1 + u + u^2$. Since
$a^2 = \frac{1}{n2^{j/2}}$, we have
\begin{eqnarray*}
\mathbb E_0 \bigl[(Z - 1)^2 \bigr] &\leq& \biggl( 1 +
\frac{\upsilon^4}{2^j} \biggr)^{2^j} -1
\nonumber
\\
&\leq& \biggl(\exp\biggl(\frac{\upsilon^4}{2^j}\biggr) \biggr)^{2^j} -1 =
\exp \bigl(\upsilon ^4\bigr) - 1
\nonumber
\\
&\leq&1 + 2 \upsilon^4 - 1 = 2 \upsilon^4,
\nonumber
\end{eqnarray*}
since for any $0 \leq u \leq1$, we have $1+u \leq\exp(u) \leq1 + 2u$.

%pa5.subsection.subsubsection.4 #&#
\noindent\textit{Step 4}: \textit{Conclusion on the testing problem (\ref{test3})}.
By combining this with equations \eqref{eq:boundZ}, \eqref{eq:mark}, we
know that for $n$ large enough
\begin{eqnarray*}
\mathbb E_0[\Psi] + \sup_{f^{(\alpha)}, \alpha\in I}\mathbb
E_{f^{(\alpha)}}[1-\Psi] \geq1 - 2 \upsilon^4,
\end{eqnarray*}
and since this holds with any $\Psi$, we have
\begin{eqnarray*}
\inf_{\Psi} \Bigl[ \mathbb E_0[\Psi] + \sup
_{f^{(\alpha)}, \alpha
\in
I}\mathbb E_{f^{(\alpha)}}[1-\Psi] \Bigr] \geq1 - 2
\upsilon^4,
\end{eqnarray*}
where $\inf_{\Psi}$ is the infimum over measurable tests $\Psi$. This
implies that there is no $1 - 2 \upsilon^4$ consistent test for test
(\ref{test3}) (and it holds for any $0 \leq\upsilon<1$).

%pa5.subsection.subsubsection.5 #&#
\noindent\textit{Step 5}: \textit{Translation of this result in terms of the test
(\ref{test})}.
Set
\begin{eqnarray*}
\rho_n = \frac{\upsilon n^{-\afrac{t}{2t+1/2}}}{2}.
\end{eqnarray*}

Since $\upsilon\leq B$,
\begin{eqnarray*}
\bigl\|f^{(\alpha)}\bigr\|_{t,2, \infty} = \sqrt{\sum
_{k\in Z_l} \bigl(a_{k}^{(\alpha
)}
\bigr)^22^{2jt}} = \upsilon\leq B,
\end{eqnarray*}
so $f^{(\alpha)} \in\Sigma(t,B)$.

Also since $\forall\alpha\in I$, only the $j$th first coefficients of
$f^{(\alpha)}$ are non-zero (i.e., $f^{(\alpha)} = \Pi_{W_{j}}
(f^{(\alpha)}) = \sum_{k \in Z_j} a_{j,k}^{(\alpha)} \psi_{j,k}$), then
by definition of $\Sigma(s,B)$
\begin{eqnarray*}
\bigl\|f^{(\alpha)} - \Sigma(s,B)\bigr\|_{2} &=& \inf_{(b_{l,k})_{l,k}: 2^{ls}\|
b_{l,\cdot}\|_{l_2} \leq B}
\sqrt{\sum_{l,k} \bigl(a_{l,k}^{(\alpha)}
- b_{l,k}\bigr)^2}
\\
&=& \inf_{(b_{l,k})_{l,k}: 2^{ls}\|b_{l,\cdot}\|_{l_2} \leq B} \sqrt {\sum
_{k\in Z_j} \bigl(a_{j,k}^{(\alpha)} -
b_{j,k}\bigr)^2 + \sum_{l \neq j, k \in
Z_l}
b_{l,k}^2}
\\
&=& \inf_{(b_{k})_{k}: 2^{js}\|b\|_{l_2} \leq B} \sqrt{\sum
_{k\in Z_j} \bigl(a_{j,k}^{(\alpha)} -
b_{k}\bigr)^2 }
\\
&=& \max \biggl(0, \sqrt{\sum_{k\in Z_j}
\bigl(a_{j,k}^{(\alpha)}\bigr)^2 } - B 2^{-j
s}
\biggr)
\\
&=& \max \bigl(0, \bigl\|\Pi_{W_{j}} f^{(\alpha)}\bigr\|_{2} - B
2^{-j s} \bigr).
\end{eqnarray*}
Since by definition of the Euclidian ball, for any $u \in l_2$, we have
$\inf_{v \in l_2: \|v\|_{l_2} = 1} \|u - v\|_{l_2} = \max(0, \|u\|
_{l_2} - 1)$.

We thus have $\forall\alpha\in I$, and for all $n$ large enough
\begin{eqnarray*}
\bigl\|f^{(\alpha)} - \Sigma(s,B)\bigr\|_{2} &\geq&\bigl\|\Pi_{W_{j}}
f^{(\alpha
)}\bigr\| _{2} - B 2^{-j s}
\\
&\geq&\upsilon n^{-\afrac{t}{2t+1/2}} - B n^{-\afrac{s}{2t+1/2}}
\\
&\geq&\frac{\upsilon n^{-\afrac{t}{2t+1/2}}}{2}
\end{eqnarray*}
by triangular inequality and since for any $g \in\Sigma(s,B), \| \Pi
_{W_{j}}(g)\|_{2} \leq2^{-s}B n^{-\afrac{s}{2t+1/2}} \leq\afrac
{\upsilon
}{2} n^{\afrac{t}{2t+1/2}}$ for $n$ large enough, since $s>t$. This
together with the fact that $f^{(\alpha)} \in\Sigma(t,B)$ implies that
$\forall\alpha\in I, f^{(\alpha)}\in\tilde\Sigma(t,B,\rho_n)$.

We know that $0 \in\Sigma(s,B)$, and that $\forall\alpha,
f^{(\alpha
)}\in\tilde\Sigma(t,B,\rho_n)$ (by the previous equations). This
implies that the testing problem (\ref{test3}) is a strictly easier
problem than the testing problem (\ref{test}), that is, that
\begin{eqnarray*}
\inf_{\Psi} \Bigl[ \mathbb E_0[\Psi] + \sup
_{f^{(\alpha)}, \alpha
\in
I}\mathbb E_{f^{(\alpha)}}[1-\Psi] \Bigr] \leq\inf
_{\Psi} \Bigl[ \sup_{f
\in\Sigma(s,B)} \mathbb
E_f[\Psi] + \sup_{f \in\tilde\Sigma
(t,B,\rho
_n)}\mathbb E_{f}[1-
\Psi] \Bigr].
\end{eqnarray*}
We know that there is no $1 - 2 \upsilon^4$ consistent test for the
test (\ref{test3}) and hence, there is no $1 - 2 \upsilon^4$ consistent
test for test (\ref{test}) (and it holds for any $0 \leq\upsilon<1$).

\section*{Acknowledgements} I would like to thank Richard Nickl for
insightful discussions, as well as careful rereading and pertinent
comments. I would also like to thank Adam Bull for valuable rereading.
Finally, I would like to thank the anonymous referee for many useful
comments, as well as the Associate Editor and Editor.

% zodis "Acknowledgments" paliekamas pagal autoriu

%suskaldyti doi

% imsref loaded by jurgita.kaciuliene, 2014-01-31 10:40:25
% imsref loaded by jurgita.kaciuliene, 2014-01-31 10:52:36

\printhistory

\end{document}